\newcommand{\Star}{\ast}
\newtheorem{theorem}{Theorem}[section]
\newtheorem{definition}[theorem]{Definition}
\newtheorem{fact}[theorem]{Fact}
\newtheorem*{question}{Question}
\newcommand{\A}{\mathscr{A}}
\newcommand{\Q}{\mathcal{Q}}
\renewcommand{\P}{\mathcal{P}}
\newcommand{\SP}{\mathscr{P}}
\newcommand{\QP}{(\Q,\P)}
\newcommand{\modx}[1]{\ (\mathrm{mod}~#1)}
\newcommand{\<}{\langle}
\renewcommand{\>}{\rangle}
\newcommand{\llex}{<_\mathrm{lex}}
\newcommand{\PPos}{$\SP$-position}
\newcommand{\CGSuite}{{\itshape\textsmaller{C\kern- 0.06emG\kern-0.1emS\kern-0.04em}uite}}
\newcommand{\sh}{\textrm{\protect\raisebox{0.85pt}{\tiny /}}}
\begin{document}

\title{Mis\`ere Quotients for Impartial Games\\ {\small SUPPLEMENTARY MATERIAL}}

\author{Thane E. Plambeck\\2341 Tasso St.\\Palo Alto, CA 94301 \and Aaron N. Siegel\\Institute for Advanced Study\\1 Einstein Drive\\Princeton, NJ 08540}



\date{\today}
\maketitle

These are the supplementary appendices to the paper, Mis\`ere quotients for impartial games~\cite{siegel_200Xd}.  If you have not read the actual paper, you should do so \emph{before} reading this supplement, or else it will not make much sense!

Appendix~\ref{appendix:details} gives detailed solutions to many of the octal games discussed in the paper, and Appendix~\ref{appendix:algorithms} describes the algorithms used to compute most of our solutions.

\appendix
\setcounter{section}{1}


\section{Solutions in Detail}

\label{appendix:details}

This appendix contains detailed solutions to many of the games discussed in Appendix~A.


Figure~\ref{figure:3digitsummary} summarizes the status of every octal game with at most three code digits.  For each game $\Gamma$, the chart indicates whether $\Gamma$ is tame or wild, and whether its normal- and/or mis\`ere-play solution is known.

Figures~\ref{figure:simpleoctals} and~\ref{figure:053} present complete solutions to wild two- and three-digit octal games with relatively simple mis\`ere quotients and pretending functions.  Figure~\ref{figure:quaternaries} does the same for wild four-digit quaternaries.  Finally, Figures~\ref{figure:0115},~\ref{figure:0152} and~\ref{figure:077} present the solutions to \textbf{0.115}, \textbf{0.152} and \textbf{0.77}, respectively.  Note that $\Q(\textbf{0.15}) \cong \Q(\textbf{0.115})$.  The solution to \textbf{0.644} is omitted due to its size; see~\cite{miseregames_www}.

\newcommand{\TK}{$\oplus$}
\newcommand{\WK}{$\ominus$}
\newcommand{\WU}{$-$}
\newcommand{\WQ}{$\star$}

\begin{figure}[p]
\centering
{\small
\begin{tabular}{cc}
\begin{tabular}{>{\bfseries}c|c@{\hspace{0.15cm}}c@{\hspace{0.15cm}}c@{\hspace{0.15cm}}c@{\hspace{0.15cm}}c@{\hspace{0.15cm}}c@{\hspace{0.15cm}}c@{\hspace{0.15cm}}c}
 & \multicolumn{8}{c}{$\mathbf{d_3}$} \\
$\mathbf{d_1d_2}$ & \textbf{0} & \textbf{1} & \textbf{2} & \textbf{3} & \textbf{4} & \textbf{5} & \textbf{6} & \textbf{7} \\ \hline
.00 & \TK & \TK & \TK & \TK & \WU & \WU & \WU & \WU \\
.01 & \TK & \TK & \TK & \TK & \WU & \WU & \WU & \WQ \\
.02 & \TK & \TK & \TK & \TK & \WU & \WU & \WU & \WU \\
.03 & \TK & \TK & \TK & \TK & \WU & \WU & \WU & \WU \\
.04 & \WU & \WU & \WU & \WQ & \WK & \WQ & \WK & \WQ \\
.05 & \TK & \WQ & \TK & \WQ & \WQ & \WQ & \WU & \WQ \\
.06 & \WU & \WU & \WU & \WU & \WU & \WU & \WU & \WU \\
.07 & \WQ & \WQ & \WQ & \WQ & \WK & \WK & \WK & \WK \\
.10 & \TK & \TK & \TK & \TK & \WQ & \TK & \WQ & \TK \\
.11 & \TK & \TK & \TK & \TK & \WU & \WK & \WQ & \WQ \\
.12 & \TK & \TK & \TK & \WK & \WQ & \WU & \WU & \WQ \\
.13 & \TK & \TK & \TK & \TK & \WQ & \WU & \WU & \WQ \\
.14 & \WU & \WQ & \WU & \WU & \WK & \WQ & \WU & \TK \\
.15 & \WK & \TK & \WK & \WK & \WQ & \TK & \TK & \WK \\
.16 & \WU & \WU & \WU & \WU & \WU & \WQ & \WU & \WU \\
.17 & \WQ & \WQ & \WU & \WQ & \WU & \WQ & \WQ & \WQ \\
.20 & \TK & \TK & \TK & \TK & \WU & \WU & \WU & \WU \\
.21 & \TK & \TK & \TK & \TK & \WU & \WU & \WU & \WU \\
.22 & \TK & \TK & \TK & \TK & \WU & \WU & \WQ & \WQ \\
.23 & \TK & \TK & \TK & \TK & \WU & \WU & \WQ & \WQ \\
.24 & \TK & \WK & \TK & \WK & \WU & \WU & \WU & \WU \\
.25 & \TK & \WK & \TK & \WK & \WU & \WU & \WU & \WU \\
.26 & \WQ & \WQ & \WQ & \WQ & \WU & \WU & \WU & \WU \\
.27 & \WQ & \WQ & \WQ & \WQ & \WU & \WU & \WU & \WU \\
.30 & \TK & \TK & \TK & \TK & \TK & \TK & \TK & \TK \\
.31 & \TK & \TK & \TK & \TK & \WU & \WK & \WQ & \WK \\
.32 & \TK & \TK & \TK & \TK & \WU & \WU & \WQ & \WQ \\
.33 & \TK & \TK & \TK & \TK & \WU & \WQ & \WQ & \WQ \\
.34 & \WK & \WK & \WU & \WU & \WU & \WU & \WU & \WU \\
.35 & \WQ & \WK & \TK & \TK & \WU & \WQ & \WQ & \WQ \\
.36 & \WU & \WU & \WU & \WU & \WU & \WU & \WU & \WU \\
.37 & \WU & \WU & \TK & \TK & \WU & \WQ & \WQ & \WU \\
.40 & \WQ & \WQ & \WQ & \WQ & \WU & \WU & \WU & \WU \\
.41 & \WQ & \WQ & \WQ & \WQ & \WU & \WU & \WU & \WU \\
.42 & \WQ & \WQ & \WQ & \WQ & \WU & \WU & \WU & \WU \\
.43 & \WQ & \WQ & \WQ & \WQ & \WU & \WU & \WU & \WU
\end{tabular}
&
\begin{tabular}{>{\bfseries}c|c@{\hspace{0.15cm}}c@{\hspace{0.15cm}}c@{\hspace{0.15cm}}c@{\hspace{0.15cm}}c@{\hspace{0.15cm}}c@{\hspace{0.15cm}}c@{\hspace{0.15cm}}c}
 & \multicolumn{8}{c}{$\mathbf{d_3}$} \\
$\mathbf{d_1d_2}$ & \textbf{0} & \textbf{1} & \textbf{2} & \textbf{3} & \textbf{4} & \textbf{5} & \textbf{6} & \textbf{7} \\ \hline
.44 & \WK & \WK & \WK & \WK & \WU & \WU & \WU & \WU \\
.45 & \WQ & \WQ & \WQ & \WQ & \WQ & \WQ & \WQ & \WQ \\
.46 & \WK & \WK & \WK & \WK & \WU & \WU & \WU & \WU \\
.47 & \WQ & \WQ & \WQ & \WQ & \WQ & \WQ & \WQ & \WQ \\
.50 & \TK & \TK & \TK & \TK & \TK & \TK & \TK & \TK \\
.51 & \TK & \TK & \WK & \WK & \TK & \TK & \WQ & \WQ \\
.52 & \TK & \TK & \TK & \TK & \WQ & \WQ & \WQ & \WQ \\
.53 & \TK & \TK & \TK & \TK & \WQ & \WQ & \WQ & \WQ \\
.54 & \WQ & \WQ & \WQ & \WQ & \WQ & \WQ & \WQ & \WQ \\
.55 & \TK & \TK & \TK & \TK & \TK & \TK & \WQ & \WQ \\
.56 & \TK & \TK & \TK & \TK & \WU & \WU & \WU & \WU \\
.57 & \WK & \WK & \TK & \TK & \WQ & \WQ & \WQ & \WQ \\
.60 & \WU & \WU & \TK & \TK & \WU & \WU & \WU & \WU \\
.61 & \WU & \WU & \TK & \TK & \WU & \WU & \WU & \WU \\
.62 & \WU & \WU & \TK & \TK & \WU & \WU & \WU & \WU \\
.63 & \WU & \WU & \TK & \TK & \WU & \WU & \WU & \WU \\
.64 & \WU & \WU & \WU & \WU & \WK & \WK & \WK & \WK \\
.65 & \WU & \WU & \WU & \WU & \WK & \WK & \WK & \WK \\
.66 & \WU & \WU & \WU & \WU & \WK & \WK & \WK & \WK \\
.67 & \WU & \WU & \WU & \WU & \WK & \WK & \WK & \WK \\
.70 & \TK & \TK & \TK & \TK & \TK & \TK & \TK & \TK \\
.71 & \WK & \WK & \WK & \WK & \WQ & \WQ & \WK & \WK \\
.72 & \WK & \WK & \WK & \WK & \WK & \WK & \WK & \WK \\
.73 & \TK & \TK & \WQ & \WQ & \WQ & \WQ & \WQ & \WQ \\
.74 & \WU & \WU & \WU & \WU & \WU & \WU & \WU & \WU \\
.75 & \WK & \WK & \WK & \WK & \WQ & \WQ & \WQ & \WQ \\
.76 & \WU & \WU & \WU & \WU & \WU & \WU & \WU & \WU \\
.77 & \WK & \WK & \TK & \TK & \WU & \WU & \WU & \WU \\
4.0 & \TK & \TK & \TK & \TK & \TK & \TK & \WQ & \WQ \\
4.1 & \TK & \TK & \TK & \TK & \TK & \TK & \WQ & \WQ \\
4.2 & \TK & \TK & \TK & \TK & \TK & \TK & \WQ & \WQ \\
4.3 & \TK & \TK & \TK & \TK & \TK & \TK & \TK & \TK \\
4.4 & \WK & \WK & \WK & \WK & \WU & \WU & \WU & \WU \\
4.5 & \TK & \TK & \WQ & \WQ & \TK & \TK & \WK & \WK \\
4.6 & \WK & \WK & \WK & \WK & \WU & \WU & \WU & \WU \\
4.7 & \WK & \WK & \TK & \TK & \WK & \WK & \WQ & \WQ
\end{tabular}
\end{tabular}
}

\vspace{0.35cm}

\begin{tabular}{cp{10.5cm}}
\TK & Tame; solution is known in both normal and mis\`ere play. \\
\WK & Wild; solution is known in both normal and mis\`ere play.  See~\cite{siegel_200Xd} for details. \\
\WU & Wild; solution is unknown in either normal or mis\`ere play.  For details on two-digit octals, see~\cite{siegel_200Xd}; for three-digit octals, see~\cite{miseregames_www}. \\
\WQ & Wild; solution is known in normal play, but not in mis\`ere play.  These are the most promising candidates for further research.
\end{tabular}

\caption{Summary of 3-digit octal games.}
\label{figure:3digitsummary}
\end{figure}

\begin{sidewaysfigure}
\centering
\begin{tabular}{|>{\bfseries}l||>{$}c<{$}|r|>{$}l<{$}|}
\hline
\mdseries Code & \Q & p & \multicolumn{1}{c|}{$\Phi$} \\ \hline
0.34 & \mathcal{S}_{12} & 8 & \cpretend{a 1 a b 1 a 1 ab a c a b 1 ac 1 ab a c a b 1 ac} \\
0.75\=0 & \mathcal{R}_8 & 2 & \cpretend{a b a b c b c b ab2 b ab2 b ab2} \\
0.51\.2 & \mathcal{R}_8 & 6 & \cpretend{a a a b b b a b2 c b b b ab2 b2 c b b b ab2 b2 ab2 b b b ab2 b2 ab2} \\
0.71\.2 & \mathcal{R}_{14} & 6 & \cpretend{a b a 1 c 1 a ac a 1 c 1 a ac} \\
0.71\.6 & \mathcal{R}_{14} & 2 & \cpretend{a b a 1 c 1 c ac a ac c ac c ac c ac ac2 ac c c2 c c2 ac2 c2 ac2 c2 ac2} \\
4.5\.6 & \mathcal{R}_8 & 4 & \cpretend{a a b b a c b b c c b b ab2 ab2 b b ab2 ab2} \\
4.7\.4 & \mathcal{R}_8 & 2 & \cpretend{a b a b c b c b ab2 b ab2 b ab2} \\
0.3101 & \mathcal{R}_{14} & 2 & \cpretend{a b c2 c c2 ac2 c2 ac2 c2 ac2} \\
0.3131 & \mathcal{S}_{12} & 2 & \cpretend{a b a b b2 c b2 ab2 b2 ab2 b2 ab2} \\
\hline
\end{tabular}
\caption{Pretending functions for octal games with mis\`ere quotients isomorphic to $\mathcal{R}_8$, $\mathcal{S}_{12}$ or $\mathcal{R}_{14}$ (cf.~Figure~\ref{figure:day4quotients}).}
\label{figure:simpleoctals}

\vspace{0.5cm}

\begin{tabular}{|>{\bfseries}l||r|>{$}l<{$}|>{$}c<{$}|>{$}c<{$}|}
\hline
\mdseries Code & p & \multicolumn{1}{c|}{$\Phi$} & \Q & \P \\
\hline
0.123\.0 & 5 & \cpretend{a 1 b b a d2 1 c d a d2 1 c d} &
\raisebox{-8.25ex}[0pt][0pt]{\present{4.25cm}{a,b,c,d}{a2=1,b4=b2,b2c=b3,c2=1,b2d=d,cd=bd,d3=ad2}} &
\raisebox{-8.25ex}[0pt][0pt]{\centeredplist{1.5cm}{a,b2,ac,bd,d2}}
\\
0.1023 & 7 & \cpretend{a 1 1 b b a a d2 1 1 c d a a d2 1 1 c d} & & \\
0.1032 & 7 & \cpretend{a 1 a 1 b b b a d2 d2 1 c ad2 d a d2 d2 1 c ad2 d} & & \\
0.1033 & 7 & \cpretend{a 1 a b b b 1 d2 d2 a d ad2 c 1 d2 d2 a d ad2 c} & & \\
0.1331 & 5 & \cpretend{a a b b 1 d2 a d c 1 d2 a d c} & & \\
0.3103 & 5 & \cpretend{a b d2 d 1 a c d2 d 1 a c} & & \\
0.3112 & 5 & \cpretend{a b a 1 b d2 c a 1 d d2 c a 1 d} & & \\
\hline
& & & & \\
0.123\.2 & 6 & \cpretend{a 1 b b ab a b2 1 c b d a b2 1 c b d} &
\raisebox{-1.5ex}[0pt][0pt]{\present{5cm}{a,b,c,d}{a2=1,b4=b2,b2c=b3,bc3=bc,c4=c2,bcd=b2d,b3d2=bd2,cd2=bd2,b2d3=d3,d5=ad4}} &
\raisebox{-1.5ex}[0pt][0pt]{\centeredplist{2.15cm}{a,b2,ac,ac2,ac3,bd,ab3d,cd,ad2,b2d2,abd3,d4}} \\
0.1323 & 6 & \cpretend{a a 1 b b ab a b2 1 c b d a b2 1 c b d} & & \\
& & & & \\
\hline
\end{tabular}
\caption{Pretending functions and quotient presentations for some wild quaternary games.}
\label{figure:quaternaries}
\end{sidewaysfigure}

\begin{sidewaysfigure}
\centering
\begin{tabular}{|>{\bfseries}l||>{$}c<{$}|>{$}c<{$}|c|}
\hline
\mdseries Code & \Q & \P & \multicolumn{1}{c|}{$\Phi$} \\ \hline
0.53\.0 &
\present{6cm}{a,b,c,d,e}{a2=1,b3=b,b2c=c,c2=b2,bd=ab,cd=ac,d2=b2,be=ab,ce=ac,e2=b2} &
\plist{3cm}{a,b2,e} &
\cxpretend{9}{a a b b a b2 b b c b2 d b b e ab2 b b c ab2 ade b b ab2 ab2 b b c ab2 ab2 b b ab2 ab2 b b c ab2 ab2}
\\ \hline
0.71\.0 &
\present{6cm}{a,b,c,d}{a2=1,b4=b2,b2c=c,c4=ac3,c3d=c3,d2=1} &
\plist{3cm}{a,b2,bc,c2,ac3,ad,b3d,cd,bc2d} &
\cxpretend{6}{a b a 1 c 1 a d a 1 c 1 a d}
\\ \hline
0.72\=0 &
\present{6cm}{a,b,c,d,e,f}{a2=1,b4=b2,b2c=ab2,c2=b2,b2d=b3,cd=ab3,d2=b2,b2e=ab3,bce=bd,de=abd,e2=b2,bf=bd,cf=ab3,df=b2,ef=ab2,f2=b2} &
\plist{3cm}{a,b2,ad,be,af} &
\cxpretend{4}{a 1 b ab a 1 b ab c b2 abc ab3 c b2 d e abd b2 f e abd b2 f}
\\ \hline
0.144 &
\present{6cm}{a,b,c,d,e}{a2=1,b3=b,b2c=c,c4=c2,bd=ab,cd=ac,d2=b2,c2e=c3,de=ab2e,e2=c2} &
\plist{3cm}{a,b2,c2,ae,abce} &
\cxpretend{10}{a 1 1 a b b b b c c a ab2 ab2 d b b b b c3 e ab2 ab2 ab2 d b b b b c3 e ab2}
\\ \hline
0.2\.4\"1 &
\multicolumn{2}{c|}{$\cong \Q(\textbf{0.71})$} &
\cxpretend{10}{1 a b a 1 a 1 c 1 a 1 a d a 1 a 1 c 1 a 1 a d}
\\ \hline
0.351 &
\present{6cm}{a,b,c,d}{a2=1,b3=b,b2c=c,c3=ac2,b2d=d,c2d=ac2,d2=b2} &
\plist{3cm}{a,b2,bc,c2,d,bcd} &
\cxpretend{8}{a b a b b2 c b2 b d b d b b2 c b2 b d b d}
\\ \hline
\end{tabular}
\caption{Quotient presentations and pretending functions for \textbf{0.53}, \textbf{0.71}, \textbf{0.72}, \textbf{0.144}, \textbf{0.241}, and \textbf{0.351}.}
\label{figure:053}
\label{figure:071}
\label{figure:072}
\label{figure:0144}
\label{figure:0241}
\label{figure:0351}
\end{sidewaysfigure}

\begin{figure}
\centering
\[\begin{array}{@{}r@{~}l@{}}
\Q(\textbf{0.115}) \cong & \present[t]{9.5cm}{a,b,c,d,e,f,g,h,i}{a2=1,b4=b2,bc=ab3,c2=b2,b2d=d,cd=ad,d3=ad2,b2e=b3,de=bd,be2=ace,ce2=abe,e4=e2,bf=b3,df=d,ef=ace,cf2=cf,f3=f2,b2g=b3,cg=ab3,dg=bd,eg=be,fg=b3,g2=bg,bh=bg,ch=ab3,dh=bd,eh=bg,fh=b3,gh=bg,h2=b2,bi=bg,ci=ab3,di=bd,ei=be,fi=b3,gi=bg,hi=b2,i2=b2} \vspace{0.25cm} \\
\P = & \plist[t]{9.5cm}{a,b2,bd,d2,ae,ae2,ae3,af,af2,ag,ah,ai}
\end{array}\]

\xpretend{14}{a a a 1 a a a b b b a b b b a a a 1 c c c b b b d b b e c c c f c c c b b g d h h i ab2 ab2 abg f abg abg abe b3 b3 h d h h h ab2 ab2 abe f2 abg abg abg b3 b3 h d h h h ab2 ab2 abg f2 abg abg abg b3 b3 b3 d b3 b3 b3 ab2 ab2 abg f2 abg abg abg b3 b3 b3 d b3 b3 b3 ab2 ab2 ab2 f2 ab2 ab2 ab2 b3 b3 b3 d b3 b3 b3 ab2 ab2 ab2 f2 ab2 ab2 ab2}

\caption{Quotient presentation and pretending function for \textbf{0.115}.}
\label{figure:0115}
\end{figure}

\begin{figure}
\centering
\[\begin{array}{@{}r@{~}l@{}}
\Q(\textbf{0.152}) \cong &
\hangingpresent[t]{8cm}{a,b,c,d,e}{a2=1,b3=b,bc2=b,c3=c,bd=b,c2d=d,d2=b2,b2e=e,c2e=e,de=e,e3=ce2}
\vspace{0.25cm} \\
\P = & \plist[t]{9cm}{a,b2,c2,acd,abce,e2}
\end{array}\]

\xpretend{12}{a a 1 b b b 1 a b2 c ab b ab ab2 b2 a ab b b ce2 d ab2 b2 e b b ab ab2 b2 ab2 ab b ab ce2 b2 ab2 b2 b b b b2 ab2 b2 ce2 ab b ab ce2 b2 ab2 b2 b b b b2 ab2 b2 ce2 ab b ab ab2 b2 ab2 ab b b ce2 b2 ab2 b2 ce2 b b ab ab2}

\caption{Quotient presentation and pretending function for \textbf{0.152}.}
\label{figure:0152}
\end{figure}

\begin{figure}
\centering
\[\begin{array}{@{}r@{~}l@{}}
\Q(\textbf{0.77}) \cong &
\hangingpresent[t]{7.5cm}{a,b,c,d,e,f,g}{a2=1,b3=b,bc2=b,c3=c,bd=bc,cd=b2,d3=d,be=bc,ce=b2,e2=de,bf=ab,cf=ab2c,d2f=f,f2=b2,b2g=g,c2g=g,dg=cg,eg=cg,fg=ag,g2=b2}
\vspace{0.25cm} \\
\P = & \plist[t]{10cm}{a,b2,ac,ac2,d,ad2,e,ade,adf}
\end{array}\]

\xpretend{12}{a b ab a c ab b ab2 d b bc e ab2 b abc ab2 d2e ab b ade b2c bc abc b2c f b g ab2c b2c abc b ab2 g bc abc b2c ab2 b ab ab2 b2c abc b ab2 g b abc b2c ab2 b g ab2 b2c abc b ab2 b2c b abc b2c ab2 b g ab2 b2c abc b ab2 g bc abc b2c ab2 b g ab2 b2c abc b ab2 g b abc b2c ab2 b g ab2 b2c abc b ab2 g b}

\caption{Quotient presentation and pretending function for \textbf{0.77}.}
\label{figure:077}
\end{figure}

\begin{figure}
\centering
\begin{tabular}{l@{}c}
\begin{minipage}{6.75cm}
$\begin{array}{c@{~}l}
\mathcal{Q} \cong \langle a,b,c_n~| &
{a^2=1},\ {b^{n+1}c_n=b^{2n+3}},\\
& {(c_mc_n=b^{m+2}c_n)_{m \leq n}} \rangle
\end{array}$

\vspace{0.5cm}

$\mathcal{P} = \{a,(b^{2m})_{m \geq 1},(b^mc_n)_{m \leq n\ \textrm{and}\ m+n\ \textrm{odd}}\}$
\end{minipage}
&
\pretend{4}{1 a b ab 1 a b ab c_0 ac_0 c_1 ab^3 c_2 abc_1 c_3 abc_2 c_4 abc_3 c_5 abc_4 c_6 abc_5}
\end{tabular}

\caption{\label{figure:026}Quotient presentation and pretending function for \textbf{0.26}.}
\end{figure}

\begin{figure}
\centering
\begin{tabular}{l@{}c}
\begin{minipage}{8.25cm}
\centering
$\begin{array}{c@{~}l}
\mathcal{Q} \cong \langle a,b,c,d_n~| &
{a^2=1},\ {bc=ab^3},\ {c^2=b^4}, \\
& {b^{n+1}d_n=a^{n+1}b^{2n+5}}, {cd_n=ab^2d_n}, \\
& {d_md_n=a^{m+1}b^{m+4}d_n} \rangle
\end{array}$

\vspace{0.5cm}

$\begin{array}{c@{}l}
\P = \{ & a,(b^{2m})_{m \geq 1},(b^md_n)_{m\ \textrm{odd},\ n\ \textrm{even},\ m < n},\\
& (ab^md_n)_{m\ \textrm{even},\ n\ \textrm{odd},\ m < n}\}
\end{array}$

\end{minipage}
&
\pretend{2}{a b a b c b^3 d_0 d_1 d_2 d_3 d_4 d_5 d_6}
\end{tabular}

\caption{\label{figure:47}Quotient presentation and pretending function for \textbf{4.7}.}
\end{figure}

\clearpage

\subsection*{Algebraic Periodic Games: \textbf{0.26} and \textbf{4.7}}
\suppressfloats[t]
\label{algebraicperiodicity}
In Section A.5 we gave two examples of algebraic periodic games: \textbf{0.26} and \textbf{4.7} have infinite (and non-finitely generated) mis\`ere quotients.  Furthermore, all of their \emph{partial} quotients are finite.  Full presentations are shown in Figures~\ref{figure:026} and~\ref{figure:47}, respectively.


Since we do not have any computational methods for verifying algebraic periodicity, we must resort to manual proofs of Figures~\ref{figure:026} and~\ref{figure:47}.  In each case, the proof proceeds in two stages.  We first show that the given presentation is \emph{correct}, in the sense that $G$ is a \PPos{} iff $\Phi(G) \in \P$.  Then we show that the given presentation is \emph{reduced} (as a bipartite monoid).

\subsection*{Figure~\ref{figure:026} Is Correct for \textbf{0.26}}


Let $\A$ be the set of $\mathbf{0.26}$ positions, regarded as a free commutative monoid on the heap alphabet $\mathscr{H} = \{H_1,H_2,H_3,\ldots\}$.  Define homomorphisms $t,w : \A \to \mathbb{N}$ as follows:
\[t(H_k) = \begin{cases}
0 & \textrm{if } k \in \{1,2\}; \\
1 & \textrm{if } k \in \{3,4\}; \\
\lfloor \frac{k-5}{2} \rfloor & \textrm{if } k \geq 5.
\end{cases}\qquad
w(H_k) = \begin{cases}
0 & \textrm{if } k \leq 8; \\
1 & \textrm{if } k \in \{9,10\}; \\
(k-7)/2 & \textrm{if $k \geq 11$ is odd}; \\
(k-12)/2 & \textrm{if $k \geq 11$ is even}.
\end{cases}
\]

Let $g(G)$ denote the ordinary Grundy value of $G$.  It is easily checked that $g(H_k)$ is equal to the mod-4 parity of $k-1$.




\begin{fact}[Allemang~\cite{allemang_1984}]
\label{fact:ppos026}
Let $G \in \A$ and write $G = X + H$, where $H = H_k$ is the largest single heap appearing in $G$.  Then $G$ is a \PPos{} iff one of the following conditions holds:
\begin{enumerate}
\item[(i)] $t(G) = 0$ and $g(G) = 1$;
\item[(ii)] $t(G) \neq 0$, $w(H) \leq t(X)$, and $g(G) = 0$; or
\item[(iii)] $t(G) \neq 0$, $w(H) \geq t(X) + 2$, and $g(G) = 2$.
\end{enumerate}
\end{fact}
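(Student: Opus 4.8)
The plan is to verify this directly, via the standard backward-induction description of the mis\`ere $\SP$-positions. Write $\mathcal{S} \subseteq \A$ for the set of positions satisfying one of (i)--(iii), and recall that in mis\`ere play a set $\mathcal{S}$ coincides with the $\SP$-positions exactly when: (a) no terminal position lies in $\mathcal{S}$; (b) every option of a position in $\mathcal{S}$ lies outside $\mathcal{S}$; and (c) every non-terminal position outside $\mathcal{S}$ has an option in $\mathcal{S}$. Since the total number of tokens strictly decreases under every move the game is well-founded, so (a)--(c) suffice. Part (a) is immediate: the terminal positions of \textbf{0.26} are precisely the sums of heaps of size $1$ --- including the empty sum, the mis\`ere base case, to which the decomposition $G = X + H$ does not apply --- and on each such $G$ one has $t(G) = w(G) = g(G) = 0$, falsifying all of (i)--(iii).

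The first substantive step is to record the move structure of \textbf{0.26} and the effect of each move on the three statistics. From a heap $H_n$ the legal moves are: to $H_{n-1}$ when $n \geq 2$; to $H_{n-2}$ when $n \geq 3$; and to $H_i + H_j$ with $i + j = n - 2$ and $i, j \geq 1$ when $n \geq 4$ (the moves that would leave an empty heap are illegal, as $d_1 = 2$ and $d_2 = 6$ have their unit bits clear). Because $t$ and $w$ are monoid homomorphisms, a move changes $t(G)$ and $w(G)$ only through the single heap being played, so I would tabulate $t(H_{n-1}) - t(H_n)$, $t(H_{n-2}) - t(H_n)$, $t(H_i) + t(H_j) - t(H_n)$ and the $w$-analogues, watching the ``seams'' of the piecewise formulas --- $t$ drops back to $0$ as $n$ passes from $4$ to $5$, and $w$ first turns positive and then becomes parity-dependent around $n \in \{9, 10, 11\}$. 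For the Grundy value I use only nim-additivity over heaps together with $g(H_n) \equiv n - 1 \pmod 4$, which lets me read off the Grundy value of every option of a heap.

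With the table in hand, (b) and (c) become finite case analyses on the triple $\bigl(t(X),\, w(H),\, g(G) \bmod 4\bigr)$. For (b): starting from $G = X + H \in \mathcal{S}$ in one of the three cases, examine each move --- both inside the maximal heap $H$ (bearing in mind that if the maximal size is attained more than once, the new maximal heap may lie in $X$) and inside $X$ --- and check against the table that the new triple no longer matches any of (i)--(iii). For (c): take non-terminal $G \notin \mathcal{S}$ and split on whether $t(G) = 0$. If $t(G) = 0$ then every heap has size in $\{1, 2, 5, 6\}$, so $g(G) \in \{0, 1\}$ and hence $g(G) = 0$; one of the moves $H_2 \to H_1$, $H_6 \to H_5$, $H_5 \to H_1 + H_2$, $H_6 \to H_2 + H_2$ then yields an option in case (i). If $t(G) \neq 0$, then $G$ violates (i)--(iii) in a concrete way --- $g(G) \notin \{0, 2\}$; or $g(G) = 0$ with $w(H) > t(X)$; or $g(G) = 2$ with $w(H) < t(X) + 2$ --- and I would exhibit a single move, typically a one- or two-token removal from $H$ or a split $H_n \to H_i + H_j$, that simultaneously corrects $g$ to $0$ or $2$ and puts the comparison between the new width and the new threshold on the correct side.

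I expect the case $t(G) \neq 0$ of step (c) to be the main obstacle. There one must steer three interacting quantities --- the threshold $t(X)$, the width $w(H)$, and $g(G) \bmod 4$ --- with a single move, and argue that however (i)--(iii) currently fail, some move repairs all of them simultaneously; the split move is the flexible tool that makes the required combination attainable, especially in the delicate boundary cases near $t(G) = 1$ and near the parity seams of $w$. Everything else --- well-foundedness, the terminal positions, the Grundy recursion, and step (b), which has more cases than (c) but is mechanical given the difference table --- is routine.
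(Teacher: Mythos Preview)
The paper does not actually prove this fact.  It attributes the result to Allemang, remarks that the present formulation ``corrects some slight errors in Allemang's definition of~$t$,'' and then explicitly declines to supply an argument: ``the only proof we know is a tedious and unenlightening combinatorial slog \ldots\ We therefore choose to take the easy way out, and leave its proof as an exercise.''

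Your plan is precisely the shape of that omitted slog: verify directly that the candidate set~$\mathcal{S}$ satisfies the mis\`ere backward-induction axioms by tabulating how each legal move alters $(t,w,g)$ and then case-splitting.  The framework is sound, the mis\`ere axioms (a)--(c) are stated correctly, and the easy pieces (well-foundedness, the terminal positions, the $t(G)=0$ subcase of~(c)) are handled correctly.  What you have written, however, is a programme rather than a proof: the substantive part --- step~(c) with $t(G)\neq 0$, where one must exhibit an explicit option in~$\mathcal{S}$ in each of several subcases --- is described only in outline (``I would exhibit a single move \ldots'').  To complete the argument you would need to carry the case analysis through in full, paying particular attention to the seams of~$w$ (for instance $w(H_{12})=0<w(H_{10})$, so the largest heap need not carry the largest $w$-value) and to moves that change which heap is maximal.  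There is no missing idea here, only the tedium the authors themselves invoke.
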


Fact~\ref{fact:ppos026} corrects some slight errors in Allemang's definition of~$t$.  This would be a good reason to include a proof here; but the only proof we know is a tedious and unenlightening combinatorial slog, and anyway it is our hope that algorithmic verification methods will emerge in the near future.  We therefore choose to take the easy way out, and leave its proof as an exercise.

\begin{theorem}
Figure~\ref{figure:026} is correct for \textbf{0.26}.
\end{theorem}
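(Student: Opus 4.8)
The plan is to take Allemang's characterization of the $\SP$-positions (Fact~\ref{fact:ppos026}) as given and to match it, case by case, against membership in $\P$ under the homomorphism $\Phi\colon\A\to\Q$ determined by the values in Figure~\ref{figure:026} (continued in the obvious way: $H_{2j+9}\mapsto c_j$, $H_{2j+12}\mapsto abc_j$ with $abc_0=ab^3$, and $H_{10}\mapsto ac_0$). Since $\A$ is free commutative on the heap alphabet, $\Phi$ is automatically a well-defined monoid homomorphism, and the map is visibly onto $\Q$, so the theorem amounts to the assertion that $\Phi(G)\in\P$ iff $G$ is a $\SP$-position. The first preparatory step is a normal form for $\Q$: using $c_mc_n=b^{m+2}c_n$ $(m\le n)$, any product of $c$'s collapses to $b^Nc_{\max}$; using $b^{n+1}c_n=b^{2n+3}$, any such word whose $b$-exponent exceeds its $c$-index $n$ collapses to a pure power of $b$. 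Hence every element of $\Q$ is $a^\varepsilon b^i$ $(\varepsilon\in\{0,1\})$ or $a^\varepsilon b^ic_n$ $(0\le i\le n)$, and in these terms $\P=\{a\}\cup\{b^{2m}:m\ge1\}\cup\{b^ic_n:i\le n,\ i+n\ \text{odd}\}$; every element of $\P$ other than $a$ has trivial $a$-exponent.

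The bulk of the work is to build a dictionary between the normal form of $\Phi(G)$ and Allemang's data, writing $G=X+H$ with $H$ the largest heap. Three points drive it. (a) $\Phi(H_k)$ carries the factor $a$ exactly when $k$ is even, which (since $g(H_k)\equiv k-1\pmod 4$) is exactly when $g(H_k)$ is odd; because both the $a$-exponent of $\Phi(\cdot)$ and $g(\cdot)\bmod 2$ are additive mod $2$ over disjoint sums, the $a$-exponent of $\Phi(G)$ equals $g(G)\bmod 2$ for every $G$. (b) If no heap of size $\ge9$ occurs, then $\Phi(G)=a^{g(G)\bmod 2}\,b^{t(G)}$, since among heaps of size $\le8$ exactly $H_3,H_4,H_7,H_8$ have $t=1$, each contributing a single $b$. (c) If some heap of size $\ge9$ occurs, one collapses the $c$-factors to a single $c_n$ ($n=$ the largest occurring $c$-index) and reduces the $b$-exponent by $b^{n+1}c_n=b^{2n+3}$; the claim to be proved is that $\Phi(G)$ survives in the form $a^\varepsilon b^ic_n$ with $i\le n$ precisely when $w(H)>t(X)$ and collapses to $a^\varepsilon b^N$ when $w(H)\le t(X)$, and, moreover, that in the surviving case $i+n$ is odd iff $g(G)=2$ (equivalently, iff $w(H)-t(X)$ is even), while in the collapsed case $N$ is even iff $g(G)=0$.

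Granting (a)--(c), the verification is arithmetic. If $t(G)=0$, then $\Phi(G)\in\{1,a\}$ by (b), and $\Phi(G)\in\P$ iff $\Phi(G)=a$ iff $g(G)=1$, which is Fact~\ref{fact:ppos026}(i). If $t(G)\neq0$, then $\Phi(G)\neq a$, so $\Phi(G)\in\P$ forces the $a$-exponent to vanish, i.e.\ $g(G)$ even, which matches the dichotomy $g(G)\in\{0,2\}$ in clauses (ii) and (iii). In the collapsed case $\Phi(G)=b^N\in\P$ iff $N\ge2$ is even iff $g(G)=0$ and $w(H)\le t(X)$ --- Fact~\ref{fact:ppos026}(ii); in the surviving case $\Phi(G)=b^ic_n\in\P$ iff $i+n$ is odd iff $g(G)=2$ and $w(H)-t(X)$ is even, and since that regime is $w(H)\ge t(X)+1$, "$w(H)-t(X)$ even" there is equivalent to "$w(H)\ge t(X)+2$" --- Fact~\ref{fact:ppos026}(iii). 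The remaining possibility $w(H)=t(X)+1$ makes $i+n$ even, so $\Phi(G)\notin\P$, consistent with the failure of all three clauses there.

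I expect step (c) to be the main obstacle, for two reasons. First, the $c$-index of a single heap $H_m$ is not monotone in $m$ --- odd heaps sit one step ahead of their even neighbours, and $H_{12}$ is anomalous (contributing $ab^3$, with no $c$) --- so the largest heap $H$ need not be the one carrying the surviving $c_n$ (in $H_{13}+H_{14}$, for instance, $H_{13}$ dominates), and the surplus power of $b$ generated when the subdominant $c$-factors collapse must be tracked exactly; one has to show that the largest heap does dominate whenever it is odd, and that the genuinely non-monotone configurations are forced so deep into the interior $w(H)\le t(X)-3$ of clause (ii) that the precise reduced exponents are immaterial to $\P$-membership. Second, the parity assertions in (c) rest on a congruence relating $g(G)\bmod 4$ to $w(H)-t(X)\bmod 2$ in the relevant regime, which has to be teased out of the definitions of $g$, $w$, and $t$. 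Establishing these, and thereby pinning the reduced triple ($a$-exponent, $b$-exponent, $c$-index) of $\Phi(G)$ to the function of $(g(G),w(H),t(X))$ that Fact~\ref{fact:ppos026} requires, is the technical heart; everything else is routine bookkeeping.
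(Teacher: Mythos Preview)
Your outline is sound, and points (a) and (b) together with the normal form for $\Q$ are correct and match the paper. But step (c), which you rightly call the technical heart, is left as a plan rather than carried out; and the obstacles you anticipate there --- non-monotonicity of the $c$-index in $k$, the $H_{12}$ anomaly, arguing that bad configurations are forced ``deep in the interior'' of clause~(ii) --- are all artifacts of having missed one short lemma. The paper's key observation is that for $k\le k'$,
\[
\Phi(H_k)\,\Phi(H_{k'})=a^{g(H_k)}b^{t(H_k)}\Phi(H_{k'}),
\]
which is checked directly from the relations of $\Q$ (the only case that is not immediate is $k$ odd, $k\ge 11$, and $k'=k+1$, where both sides reduce to $ab^{2m+4}$ with $m=(k-9)/2$). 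Iterating this over the heaps of $X$ gives $\Phi(G)=a^{g(X)}b^{t(X)}\Phi(H)$ at once: the surviving factor is always $\Phi(H)$ for the \emph{largest} heap $H$, regardless of which heap happens to carry the largest $c$-index, so your non-monotonicity worry simply dissolves. A companion identity --- if $w(H)\le m$ then $b^m\Phi(H)=a^{g(H)}b^{m+t(H)}$ --- then turns the collapse case into $\Phi(G)=a^{g(G)}b^{t(G)}$ exactly, and in the surviving case the pair $(i,n)$ is read off from $\Phi(H)$ alone. With these two identities your case split goes through in a few lines, and there is no need to decide which heap's $c$-index ``dominates'' or to quarantine any configurations. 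Your own example $H_{13}+H_{14}$ illustrates this: the lemma gives $\Phi(H_{13})\Phi(H_{14})=b^{t(H_{13})}\Phi(H_{14})=b^4\cdot abc_1=ab^5c_1=ab^8$, which is $a^{g(G)}b^{t(G)}$ on the nose.
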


\begin{proof}
Let $G \in \A$.  We must show that $G$ is a \PPos{} iff $\Phi(G) \in \P$.  We first make two preliminary observations, which are easily proved by inspecting the definitions of $g$, $t$, and $\Phi$:
\[\label{eq:twoheaps} \textrm{If $k \leq k'$, then } \Phi(H_k)\Phi(H_{k'}) = a^{g(H_k)}b^{t(H_k)}\Phi(H_{k'}). \tag{\dag}\]
\[\label{eq:severalheaps} \textrm{If $w(H) \leq m$, then } b^m\Phi(H) = a^{g(H)}b^{m+t(H)}. \tag{\ddag}\]

Now fix $G$, and write $G = X + H$ where $H = H_k$ is the largest heap appearing in $G$.  By repeated application of (\ref{eq:twoheaps}), we have $\Phi(G) = a^{g(X)}b^{t(X)}\Phi(H)$.  There are five cases.

\vspace{0.15cm}\noindent
\emph{Case 1}: $t(G) = 0$.  Then $\Phi(G) = a^{g(G)}$ and the conclusion is evident.

\vspace{0.15cm}\noindent
\emph{case 2}: $t(G) \neq 0$ and $w(H) \leq t(X)$.  Then by (\ref{eq:severalheaps}) we have
\[\Phi(G) = a^{g(X)}b^{t(X)}\Phi(H) = a^{g(X)}a^{g(H)}b^{t(X)+t(H)} = a^{g(G)}b^{t(G)}.\]
So $\Phi(G) \in \P$ iff $g(G)$ and $t(G)$ are both even.  But $t(G)$ is even iff $g(G) \in \{0,1\}$, so it follows that
\[\Phi(G) \in \P \textrm{ iff } g(G) = 0.\]
This agrees with the characterization in Fact \ref{fact:ppos026}.

\vspace{0.15cm}\noindent
\emph{Case 3}: $t(G) \neq 0$, $w(H) > t(X)$, and $k \in \{9,10\}$.  Then $w(H) = 1$, so $t(X) = 0$, and we have $\Phi(G) = a^{g(X)}\Phi(H)$.  Since $\Phi(H_9) = c_0$ and $\Phi(H_{10}) = ac_0$, this necessarily implies $\Phi(G) \not\in \P$, which agrees with Fact~\ref{fact:ppos026}.

\vspace{0.15cm}\noindent
\emph{Case 4}: $t(G) \neq 0$, $w(H) > t(X)$, and $k \geq 11$ is odd.  Then $\Phi(H) = c_{w(H)-1}$ and we have
\[\Phi(G) = a^{g(G)}b^{t(X)}c_{w(H)-1}.\]
Thus $\Phi(G) \in \P$ precisely when $g(G)$ and $t(X) + w(H)$ are both even.  But $t(X) + w(H)$ is even iff $t(G)$ is odd, so this agrees with Fact~\ref{fact:ppos026}.

\vspace{0.15cm}\noindent
\emph{Case 5}: $t(G) \neq 0$, $w(H) > t(X)$, and $k \geq 11$ is even.  Then $\Phi(H) = abc_{w(H)}$ and so
\[\Phi(G) = a^{g(G)}b^{t(X)+1}c_{w(H)}.\]
Thus $\Phi(G) \in \P$ precisely when $g(G)$ and $t(X) + w(H)$ are both even, and the conclusion is just as in Case 4.
\end{proof}

This shows that $\Phi : \A \to \Q$ is a homomorphism of bipartite monoids.  The following theorem completes the picture.

\begin{theorem}
The bipartite monoid $\QP$ given in Figure \ref{figure:026} is reduced.
\label{theorem:026reduced}
\end{theorem}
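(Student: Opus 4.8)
The plan is to prove reducedness directly from the definition: a bipartite monoid $(\Q,\P)$ is \emph{reduced} if whenever $x,y\in\Q$ satisfy $xz\in\P\Leftrightarrow yz\in\P$ for every $z\in\Q$, then $x=y$. (Together with the previous theorem this finishes the identification of $(\Q,\P)$ with the mis\`ere quotient $\Q(\mathbf{0.26})$: since $\Phi$ hits every generator --- $a=\Phi(H_2)$, $b=\Phi(H_3)$, $c_n=\Phi(H_{2n+9})$ --- it is a surjective, outcome-preserving homomorphism of bipartite monoids, so it factors through $\Q(\mathbf{0.26})$ by a map which, the target being reduced, is forced to be an isomorphism.) Thus it suffices to distinguish any two distinct elements of $\Q$.

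The first step is to fix a normal form. Orienting the relations as rewriting rules $a^2\to 1$, $c_mc_n\to b^{m+2}c_n$ (for $m\le n$) and $b^{n+1}c_n\to b^{2n+3}$, one checks termination (an obvious weight drops) and confluence --- the only critical pairs, of the shapes $c_\ell c_mc_n$, $c_n^3$, $b^{n+1}c_n^2$ and $b^{n+1}c_mc_n$, all resolve --- so every element of $\Q$ has a unique normal form, either $a^\delta b^i$ with $\delta\in\{0,1\}$, $i\ge 0$, or $a^\delta b^ic_n$ with $\delta\in\{0,1\}$, $0\le i\le n$. At the same time I read off from Figure~\ref{figure:026} the membership rules I will need: $a^\delta b^i\in\P$ iff $(\delta,i)=(1,0)$ or ($\delta=0$ and $i$ is even and positive); $a^\delta b^ic_n\in\P$ (for $i\le n$) iff $\delta=0$ and $i+n$ is odd; and if $i>n$ then $b^ic_n=b^{i+n+2}$, which reduces to the first rule.

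The core of the argument is then a case analysis over pairs of distinct normal forms, producing in each case an explicit separator $z\in\{1,\,a,\,b^k,\,c_N,\,ac_N\}$ with exactly one of $xz$, $yz$ in $\P$. A difference in the $a$-exponent is exposed by multiplying through by $a$; a parity difference between two $b$-exponents is exposed by $z=1$; and the one genuinely useful device handles everything else --- multiply by $c_N$ (or a power of $b$) for an $N$ chosen so that exactly one side's $c$-factor \emph{collapses} via $b^{N+1}c_N=b^{2N+3}$ to a pure power of $b$, turning an invisible difference into a parity difference. For instance, $b^i$ and $b^j$ with $i<j$ of equal parity are separated by $z=c_i$: one gets $b^ic_i\notin\P$ since $2i$ is even, while $b^jc_i=b^{i+j+2}\in\P$ since $i+j$ is even. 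The $c$-containing forms, and mixed pairs such as $b^i$ versus $b^jc_{n'}$, are dispatched the same way, with $N$ running over a short interval whose endpoints depend on $i,j,n,n'$.

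I expect the only real difficulty to be organizational: keeping the case analysis honest at the \emph{boundary} configurations --- $i=0$, $i=n$, or small $n$ --- where the interval from which $N$ is chosen shrinks to a single point or is empty, so that one must substitute a hand-picked separator (typically $z=c_n$ or $z=b^{n-i}$) and verify it directly. A representative nuisance case is $x=1$ versus $y=b^nc_n$, separated by $z=c_n$: there $1\cdot c_n\in\P$ iff $n$ is odd, whereas $b^nc_n\cdot c_n=b^{3n+4}\in\P$ iff $n$ is even. No single verification is deep; the risk is merely overlooking such a degenerate pair, so I would organize the write-up by first tabulating all pairs of normal-form \emph{types}, giving for each type-pair the separator as an explicit piecewise function of the exponents, and then discharging the finitely many boundary exceptions one at a time.
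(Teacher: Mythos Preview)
Your plan is correct and is essentially the paper's proof: pin down a unique normal form, then separate any two distinct normal forms by an explicit $z$. The paper streamlines your case split by setting $c_\infty := 1$, so that \emph{every} element is $a^i b^m c_n$ with $i\in\{0,1\}$, $m\le n\le\infty$; this folds your two normal-form types into one, and the remaining case analysis is organised by the pair $(m+n \bmod 2,\ n-m)$ rather than by type-pairs, which cuts the bookkeeping roughly in half. One wording to fix: ``a difference in the $a$-exponent is exposed by multiplying through by $a$'' is not right as stated---multiplying both sides by $a$ is a bijection and cannot create a distinction; the paper handles $i\ne i'$ by taking $z=b^M$ with $M>\max(n,n')$ of suitable parity, so that both $xz$ and $x'z$ collapse to pure $b$-powers and the surviving $a$ on one side forces it out of $\P$.
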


\begin{proof}
For convenience, write $c_\infty = 1$.  Then every $x \in \Q$ may be written uniquely as $a^ib^mc_n$, where $i \in \{0,1\}$, $m \in \mathbb{N}$, $n \in \mathbb{N} \cup \{\infty\}$, and $m \leq n$.  We will prove that, if $(i,m,n) \neq (i',m',n')$, then $x = a^ib^mc_n$ and $x' = a^{i'}b^{m'}c_{n'}$ are distinguishable.

First suppose $i \neq i'$, so without loss of generality $i = 0$ and $i' = 1$.  Fix an integer $M > \max\{n,n'\}$ such that $m + n + M$ is even.  Then $xb^M = b^{m+n+2+M} \in \P$, while $x'b^M = ab^{m'+n'+2+M} \not\in \P$.

Now assume $i = i'$.  It suffices to assume $i = 0$, since if $x$ and $x'$ are distinguished by $z$, then $ax$ and $ax'$ are distinguished by $az$.  We may also assume, without loss of generality, that $n-m \leq n'-m'$.  There are several cases.

\vspace{0.15cm}\noindent
\emph{Case 1}: $m + n \not\equiv m' + n' \modx{2}$.  Then let $M > \max\{n,n'\}$ be such that $m + n + M$ is even.  As before, $(b^mc_n)b^M = b^{m+n+2+M}$, while $(b^{m'}c_n)b^M = b^{m'+n'+2+M}$.  Since $m+n+2+M$ is even and $m'+n'+2+M$ is odd, we are done.

\vspace{0.15cm}\noindent
\emph{Case 2}: $m + n \equiv m' + n' \modx{2}$ and $n-m < n'-m'$.  Without loss of generality, we may assume that $n-m < n'-m'$.  Then $xb^{n-m+1} = b^{n+1}c_n = b^{2n+3} \not\in \P$, while $x'b^{n-m+1} = b^{m'+n-m+1}c_{n'}$.  But $m'+n-m+1 < m'+n'-m'+1 = n'+1$, and
\[m'+n-m+1+n' \equiv (m'+n') + (n-m) + 1 \equiv (m'+n') + (m+n) + 1 \equiv 1 \modx{2},\]
so this is in $\P$.

\vspace{0.15cm}\noindent
\emph{Case 3}: $n-m = n'-m'$.  Then we may assume that $n < n'$.  Let $M > \max\{n,n'\}$ and put $N = m'+n'+2+M$.  Then $x'b^Mc_N = b^{m'+n'+2+M}c_N \not\in \P$.  However,
\[xb^Mc_N = b^{m+n+2+M}c_N = b^{m+n+4+M+N}.\]
Since $m+n \equiv m'+n' \modx{2}$, this exponent is even.  Hence $xb^Mc_N \in \P$.
\end{proof}

\subsection*{Figure~\ref{figure:47} Is Correct for \textbf{4.7}}

Let $\A$ be the set of \textbf{4.7} positions, regarded as a free commutative monoid on the heap alphabet $\mathscr{H} = \{H_1,H_2,H_3,\ldots\}$.  Define homomorphisms $t,w : \A \to \mathbb{N}$ as follows:

\[t(H_k) = \begin{cases}
0 & \textrm{if } k = 1; \\
1 & \textrm{if } k = 2; \\
k-3 & \textrm{if } k \geq 3.
\end{cases}\qquad
w(H_k) = \begin{cases}
0 & \textrm{if } k \leq 4; \\
1 & \textrm{if } k = 5; \\
k-6 & \textrm{if } k \geq 6.
\end{cases}
\]

Let $g(G)$ denote the ordinary Grundy value of $G$.  It is easily checked that $g(H_k) = 2$ if $k$ is even, $1$ if $k$ is odd.

\begin{fact}[Allemang~\cite{allemang_1984}]
\label{fact:ppos47}
Let $G \in \A$ and write $G = X + H$, where $H = H_k$ is the largest single heap appearing in $G$.  Then $G$ is a \PPos{} iff one of the following conditions holds:
\begin{enumerate}
\item[(i)] $t(G) = 0$ and $g(G) = 1$;
\item[(ii)] $t(G) \neq 0$, $w(H) \leq t(X)$, and $g(G) = 0$; or
\item[(iii)]$t(G) \neq 0$, $w(H) \geq t(X) + 2$, and $g(G) = 3$.
\end{enumerate}
\end{fact}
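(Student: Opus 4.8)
The plan is to verify the characterization in the standard way, by induction on the move relation for \textbf{4.7}; this relation is well-founded because a suitable nonnegative integer --- for instance twice the token count minus the number of heaps --- strictly decreases under every legal move (note that plain token count is not enough, since the code digit $\mathbf{4}$ permits splitting a heap without removing anything). Write $\mathcal{P}$ for the set of positions satisfying (i), (ii), or (iii). The base case is the empty position, which under the mis\`ere convention is an \NPos{} and indeed lies outside $\mathcal{P}$ (there $t = 0$ but $g = 0 \ne 1$). For the inductive step it suffices to prove two closure statements: (a) every option of a position in $\mathcal{P}$ lies outside $\mathcal{P}$, and (b) every nonempty position outside $\mathcal{P}$ has an option in $\mathcal{P}$. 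Granting the inductive hypothesis for all smaller positions, (a) forces each $G \in \mathcal{P}$ to be a \PPos{} and (b) forces each $G \notin \mathcal{P}$ to be an \NPos{}; together with the base case this is exactly the Fact.

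Before the case analysis I would record the local arithmetic that drives it. As already observed, $g(H_k) = 1$ for odd $k$ and $2$ for even $k$; moreover $g$ is nim-additive while $t$ and $w$ are ordinary-additive, so the triple $(g, t, w)$ of any position is determined heap by heap. I would then tabulate, for a single heap $H_k$, the effect $(\Delta g, \Delta t, \Delta w)$ of each move inside it --- emptying it, leaving the one heap $H_{k-1}$, or splitting it into a pair $H_i + H_j$ with $i + j \in \{k, k-1\}$ --- using that for all but the smallest heaps $t$ and $w$ are affine in the heap size with $w = t - 3$, so that only finitely many families of transitions occur; the smallest heaps behave irregularly and must be treated individually. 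Since $H_k$ is the largest heap of $G = X + H_k$, a move from $G$ either acts inside $X$ --- fixing $w(H_k)$, weakly decreasing $t(X)$, and altering $g$ by the $\Delta g$ of the affected $X$-heap --- or acts inside $H_k$, replacing it by $0$, by $H_{k-1}$, or by a pair; in the latter cases the new largest heap of $G'$ may be one of the new pieces, so $X'$ and $H'$ must be re-identified before $G'$ is reclassified.

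With this data the proof is a systematic march through cases. For (a), split on which of (i)--(iii) holds, then on the location and type of the move, and in each instance push $(g, t, w)$ through the tabulated transitions to confirm that $G'$ fails all of (i)--(iii). For (b), split on the manner in which $G$ fails $\mathcal{P}$ --- the exhaustive possibilities are $t = 0$ with $g \ne 1$; $t \ne 0$ with $w(H) \le t(X)$ and $g \ne 0$; $t \ne 0$ with $w(H) = t(X) + 1$; and $t \ne 0$ with $w(H) \ge t(X) + 2$ and $g \ne 3$ --- and in each produce an explicit good move, typically a move in the largest heap that repairs the Grundy value while landing in the correct $w$-versus-$t(X)$ regime, falling back to a move inside $X$ when the largest heap is too rigid to supply one. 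That splitting off a large heap leaves the remainder heavy enough relative to the new largest heap is exactly the content of the affine estimates on $t$ and $w$ above.

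The main obstacle I expect to be the sheer volume of cases, concentrated in three places. The split moves are the worst, because the largest heap of $G'$ must be re-found and the position possibly reassigned to a different one of (i)--(iii). The small-heap regime --- where $t$ and $w$ are ad hoc and the affine estimates fail --- produces a scatter of base-like subcases, and is precisely where the corrections to Allemang's original definition of $t$ must be pinned down. And for direction (b), the boundary regime $w(H) = t(X) + 1$, in which $G \notin \mathcal{P}$ whatever the value of $g$, calls for a winning move whose existence is not transparent. This is the ``tedious and unenlightening combinatorial slog'' mentioned earlier; as with Fact~\ref{fact:ppos026}, one would prefer an eventual algorithmic verification, and we omit the details on the same grounds.
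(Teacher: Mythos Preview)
The paper does not actually prove this Fact: it is stated with attribution to Allemang and, by parallel with the discussion of Fact~\ref{fact:ppos026}, implicitly left as an exercise (the paper says of the \textbf{0.26} analogue that ``the only proof we know is a tedious and unenlightening combinatorial slog'' and leaves it as an exercise; no separate proof is offered for \textbf{4.7}). Your proposal likewise stops short of a complete argument, instead outlining the standard inductive verification---show that every option of a claimed \PPos{} lies outside the claimed set, and that every other nonempty position has an option inside it---and then explicitly deferring the casework on the same grounds. In that sense your write-up and the paper are in the same position: both name the method, both acknowledge the slog, neither carries it out.

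Your outline is the correct plan and the well-foundedness measure is sound. One small inaccuracy: the remark about ``corrections to Allemang's original definition of~$t$'' in the paper refers specifically to the \textbf{0.26} function, not the \textbf{4.7} one, so invoking it here is slightly misplaced. Otherwise there is nothing to compare---the paper simply has no proof of its own for you to diverge from.
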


\begin{theorem}
Figure~\ref{figure:47} is correct for \textbf{4.7}.
\end{theorem}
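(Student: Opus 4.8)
The plan is to transcribe the preceding \textbf{0.26} correctness proof almost line for line, with Fact~\ref{fact:ppos026} replaced by Fact~\ref{fact:ppos47} and the presentation of Figure~\ref{figure:026} replaced by that of Figure~\ref{figure:47}. Fix $G \in \A$ and, as in Fact~\ref{fact:ppos47}, write $G = X + H$ with $H = H_k$ the largest heap in $G$. First I would record the two structural identities
\[\textrm{if } k \le k', \textrm{ then } \Phi(H_k)\Phi(H_{k'}) = a^{g(H_k)}b^{t(H_k)}\Phi(H_{k'}),\]
\[\textrm{if } w(H) \le m, \textrm{ then } b^m\Phi(H) = a^{g(H)}b^{m+t(H)},\]
each verified by inspecting the definitions of $g$, $t$, $w$, $\Phi$ against the defining relations of $\Q$: the first identity uses $bc = ab^3$, $c^2 = b^4$ and $d_m d_n = a^{m+1}b^{m+4}d_n$, while the second uses $b^{n+1}d_n = a^{n+1}b^{2n+5}$ together with $bc = ab^3$. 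Iterating the first identity over the heaps of $X$ gives $\Phi(G) = a^{g(X)}b^{t(X)}\Phi(H)$.

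Next I would record the arithmetic of the invariants. Since $g(H_k) \in \{1,2\}$ is odd exactly when $k$ is odd, and $t(H_k)$ is even exactly when $k$ is odd, one has $t(H_k) \equiv \lfloor g(H_k)/2 \rfloor \pmod 2$ for every $k$; because XOR is bitwise, summing over the heaps of $G$ gives $t(G) \equiv \lfloor g(G)/2 \rfloor \pmod 2$, and likewise for $X$. Two consequences drive the rest: $a^{g(G)}b^{t(G)} \in \P$ if and only if $g(G) = 0$; and for $n \ge 0$ with $t(X) \le n$, membership $a^{g(X)}b^{t(X)}d_n \in \P$ can only happen when $t(X)$ and $n$ have opposite parity, so the strict inequality $t(X) < n$ that $\P$ demands is automatic.

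The core is then a case analysis. If $t(G) = 0$ then $G$ is a sum of copies of $H_1$ and $\Phi(G) = a^{g(G)}$, matching Fact~\ref{fact:ppos47}(i). If $t(G) \neq 0$ and $w(H) \le t(X)$ --- which is automatic when $k \le 4$ or $k = 6$, and occurs for some positions with $k = 5$ or $k \ge 7$ --- the second identity gives $\Phi(G) = a^{g(G)}b^{t(G)}$, so $\Phi(G) \in \P$ iff $g(G) = 0$, matching (ii). If $w(H) > t(X)$ and $k = 5$, then $w(H_5) = 1$ forces $t(X) = 0$ and $\Phi(G) = a^{g(X)}c \in \{c, ac\}$, which is not in $\P$; and indeed Fact~\ref{fact:ppos47} calls $G$ an \NPos{} here, since $w(H) = t(X) + 1$ satisfies neither (ii) nor (iii). (The case $w(H) > t(X)$ with $k = 6$ is vacuous, since $w(H_6) = 0$.) Finally, if $w(H) > t(X)$ and $k \ge 7$, then $\Phi(H) = d_{k-7}$ and, because $t(X) \le k-7$, no relation of $\Q$ applies to the word $a^{g(X)}b^{t(X)}d_{k-7}$; comparing this word against the ``$b^m d_n$'' and ``$ab^m d_n$'' families of $\P$ and tracking the parities of $g(X)$, $t(X)$ and $n = k-7$ shows that $\Phi(G) \in \P$ iff $g(G) = 3$, and that in this case $g(G) = 3$ forces $w(H) \ge t(X) + 2$ --- precisely condition (iii).

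The step I expect to be the real work is this last case. One must keep straight the interlocking parities built into $\P$ (the presence of an ``$a$'' forces the $d$-index odd and the $b$-exponent even, its absence forces the $d$-index even and the $b$-exponent odd, and both sub-families require $m < n$) and confirm that they match up exactly with $g(G) = 3$ together with $w(H) \ge t(X) + 2$. What makes it go through is that $t(X) \equiv \lfloor g(X)/2 \rfloor \pmod 2$ --- so the $a$-exponent and the parity of the $b$-exponent together determine $g(X)$ --- and that the parity of $n = w(H)-1$ equals $\lfloor g(H_k)/2 \rfloor$, so that for each parity of $k$ exactly one of the two $\P$-families is available, and it forces $g(G) = 3$. (As in the \textbf{0.26} argument, reading membership in $\P$ off the word $a^{g(X)}b^{t(X)}d_{k-7}$ tacitly uses that this is a reduced form; this can be quoted from the reducedness of Figure~\ref{figure:47} or checked directly from the presentation.)
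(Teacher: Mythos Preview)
Your proposal is correct and follows essentially the same approach as the paper's own proof: the same two structural identities $(\dag)$ and $(\ddag)$, the same reduction $\Phi(G)=a^{g(X)}b^{t(X)}\Phi(H)$, and the same case split on $t(G)=0$, $w(H)\le t(X)$, and $w(H)>t(X)$ with $k$ small or $k\ge 7$. You are actually slightly more careful than the paper in two places---you note explicitly that the $k=6$, $w(H)>t(X)$ subcase is vacuous, and you spell out why the strict inequality $m<n$ in the $\P$-description is forced by the parity constraints---but these are refinements of the same argument, not a different route.
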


\begin{proof}
Fix $G \in \A$.  We must show that $G$ is a \PPos{} iff $\Phi(G) \in \P$.  As in the case of \textbf{0.26}, the following observations are easily checked:

\[\label{eq:twoheaps47} \textrm{If $k \leq k'$, then } \Phi(H_k)\Phi(H_{k'}) = a^{g(H_k)}b^{t(H_k)}\Phi(H_{k'}). \tag{\dag}\]
\[\label{eq:severalheaps47} \textrm{If $w(H) \leq m$, then } b^m\Phi(H) = a^{g(H)}b^{m+t(H)}. \tag{\ddag}\]

Now fix $G$, and write $G = X + H$ where $H = H_k$ is the largest heap appearing in $G$.  By repeated application of (\ref{eq:twoheaps47}), we have $\Phi(G) = a^{g(X)}b^{t(X)}\Phi(H)$.  There are four cases.

\vspace{0.15cm}\noindent\emph{Case 1}:
$t(G) = 0$.  Then $\Phi(G) = a^{g(G)}$ and the conclusion is evident.

\vspace{0.15cm}\noindent\emph{Case 2}:
$t(G) \neq 0$ and $w(H) \leq t(X)$.  Then by (\ref{eq:severalheaps47}) we have
\[\Phi(G) = a^{g(X)}b^{t(X)}\Phi(H) = a^{g(X)}a^{g(H)}b^{t(X)+t(H)} = a^{g(G)}b^{t(G)}.\]
So $\Phi(G) \in \P$ iff $g(G)$ and $t(G)$ are both even.  But $t(G)$ is even iff $g(G) \in \{0,1\}$, so this agrees with the characterization in Fact~\ref{fact:ppos47}.

\vspace{0.15cm}\noindent\emph{Case 3}:
$t(G) \neq 0$, $w(H) > t(X)$, and $k \in \{5,6\}$.  Then $w(H) = 1$, so $t(X) = 0$, and we have $\Phi(G) = a^{g(X)}\Phi(H)$.  Since $\Phi(H_5) = c$ and $\Phi(H_6) = b^3$, this necessarily implies $\Phi(G) \not\in \P$, which agrees with Fact~\ref{fact:ppos47}.

\vspace{0.15cm}\noindent\emph{Case 4}:
$t(G) \neq 0$, $w(H) > t(X)$, and $k \geq 7$.  Then $\Phi(H) = d_{w(H)-1}$, so
\[\Phi(G) = a^{g(X)}b^{t(X)}d_{w(H)-1}.\]

Now if $w(H)$ is odd, then $\Phi(G) \in \P$ iff $g(X)$ is even and $t(X)$ is odd.  But $w(H) \equiv g(H) \pmod{2}$, and $w(H) \not\equiv t(H) \pmod{2}$, so this means $g(G)$ and $t(G)$ are both \emph{odd}.  Since $t(G)$ is odd iff $g(G) \in \{2,3\}$, this agrees with the characterization in Fact~\ref{fact:ppos47}.

Likewise, if $w(H)$ is even, then $\Phi(G) \in \P$ iff $g(X)$ is odd and $t(X)$ is even.  Once again, this means that $g(G)$ and $t(G)$ are both odd.  This exhausts all cases and completes the proof.
\end{proof}

\begin{theorem}
The bipartite monoid $\QP$ given in Figure~\ref{figure:47} is reduced.
\label{theorem:47reduced}
\end{theorem}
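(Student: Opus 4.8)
The strategy is to follow the proof of Theorem~\ref{theorem:026reduced} almost line for line: exhibit a set of normal forms for the elements of $\Q$, and then show that any two distinct normal forms are \emph{distinguishable}, meaning that for each such pair $x,x'$ there is a $z \in \Q$ with exactly one of $xz$, $x'z$ in $\P$. Since every element of $\Q$ equals one of the normal forms, this is exactly the statement that no two distinct elements of $\Q$ are indistinguishable, which is what "reduced" requires; uniqueness of the normal forms then comes for free.

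First I would pin down the normal forms. Reading off the relations of Figure~\ref{figure:47}: $a^2 = 1$ bounds the $a$-exponent; $c^2 = b^4$, $cd_n = ab^2d_n$, and $d_md_n = a^{m+1}b^{m+4}d_n$ let one delete all but one letter from the set $\{c, d_0, d_1, \dots\}$; $bc = ab^3$ turns $b^mc$ into $ab^{m+2}$ for $m \geq 1$, so the only surviving words ending in $c$ are $c$ and $ac$; and $b^{n+1}d_n = a^{n+1}b^{2n+5}$ drives the $b$-exponent in front of any $d_n$ below $n+1$. A routine confluence check — or a direct construction of the monoid on these words — then shows that every element of $\Q$ is $a^ib^m$ with $i \in \{0,1\}$ and $m \geq 0$, or $a^ic$ with $i \in \{0,1\}$, or $a^ib^md_n$ with $i \in \{0,1\}$ and $0 \leq m \leq n$. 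One also transcribes the $\P$-membership rules into these coordinates from Figure~\ref{figure:47}: $b^{2k} \in \P$ for $k \geq 1$, whereas $1$, the odd powers of $b$, the elements $ab^m$ with $m \geq 1$, $c$, and $ac$ are all outside $\P$; $b^md_n \in \P$ exactly when $m$ is odd, $n$ is even, and $m < n$; and $ab^md_n \in \P$ exactly when $m$ is even, $n$ is odd, and $m < n$.

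The substance of the proof is the distinguishability analysis, organized as in Theorem~\ref{theorem:026reduced}. Pairs with different $a$-exponents are disposed of first: multiplying by a large power $b^M$ sends every normal form to one of the shapes $a^jb^N$ or $a^jb^Nd_n$ with $N$ large (a word ending in $c$ becomes a $b$-word via $bc = ab^3$, and any $d_n$ overflows via $b^{n+1}d_n = a^{n+1}b^{2n+5}$), and from the rules above the membership of the product is governed by its $a$-exponent together with one parity, which a suitable choice of parity for $M$ — or, for the $c$-words, multiplication by a large $d_N$ instead — makes disagree. Replacing $x,x'$ by $ax,ax'$, one then assumes equal $a$-exponents, and argues within each type. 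Two $b$-powers $b^{2k}$, $b^{2k'}$ with $k < k'$ are separated by $z = b^{k''}d_n$ with $k''$ odd and $n$ even suitably chosen, so that $b^{2k}z$ stays a $d$-form lying in $\P$ while $b^{2k'}z$ overflows to a $b$-form outside $\P$. A word $c$ or $ac$ is separated from any other normal form by a short direct computation, since one multiplication by $b$ converts it into a $b$-word. And two $d$-forms $b^md_n$, $b^{m'}d_{n'}$ with $n - m \leq n' - m'$ are handled by the three cases of Theorem~\ref{theorem:026reduced}: split on whether $m+n$ and $m'+n'$ have the same parity (if not, multiply by a suitable $b^M$); if they do and $n - m < n' - m'$, multiply by $b^{\,n-m+1}$, which collapses $b^md_n$ to $a^{n+1}b^{2n+5}$ (outside $\P$) while $b^{m'}d_{n'}$ remains a $d$-form whose exponents carry the complementary parity; and if $n - m = n' - m'$ with $n < n'$, multiply by $b^Md_N$ for large $M$ and $N$ of appropriate parity, using $d_nd_N = a^{n+1}b^{n+4}d_N$ and then the $b^{N+1}d_N$ relation to land one side on an even power of $b$ and the other on an odd power. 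Finally, a $b$-form against a $d$-form reduces to the earlier items by multiplying by a large $b^M$ to overflow the $d$-form.

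I expect the $d$-versus-$d$ case with $n - m = n' - m'$ to be the main obstacle, just as it was for \textbf{0.26}: the only way to feel the difference between $n$ and $n'$ is to absorb both $d$-forms into a common third generator $d_N$ and then unfold it via $b^{N+1}d_N = a^{N+1}b^{2N+5}$, after which one must track the resulting $b$-exponent modulo $2$ — this is where the somewhat unexpected exponents $a^{m+1}b^{m+4}$ and $a^{n+1}b^{2n+5}$ of Figure~\ref{figure:47} enter — carefully enough to force a genuine parity mismatch for every $n < n'$. The one feature absent from the \textbf{0.26} argument, the stray words $c$ and $ac$, adds only a brief finite check, since these are "small" elements that a single multiplication by $b$ moves into the $b$-part of the monoid.
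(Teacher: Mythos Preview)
Your proposal is correct and takes essentially the paper's approach: the paper gives no detailed proof of Theorem~\ref{theorem:47reduced}, stating only that it ``is much like Theorem~\ref{theorem:026reduced},'' and your normal forms and case structure are the right adaptation. One small caveat: because the relations $bc = ab^3$ and $b^{n+1}d_n = a^{n+1}b^{2n+5}$ introduce extra $a$-factors (unlike $b^{n+1}c_n = b^{2n+3}$ in the $\mathbf{0.26}$ case), a $b^M$-multiplier alone will not always separate elements with differing $a$-exponent---e.g.\ $1$ and $ad_0$---so your $d_N$-multiplier fallback is needed beyond just the $c$-words.
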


The proof of Theorem~\ref{theorem:47reduced} is much like Theorem~\ref{theorem:026reduced}.

\subsection*{Games Born by Day 4}
\suppressfloats[t]
\label{dayfour}
The quotients $\Q(G)$, for a single mis\`ere game $G$, are fundamental (but in some cases still quite intricate).
Figure~\ref{figure:day4quotients} summarizes all quotients obtained this way, to birthday four.  Each quotient is listed together with its order, monoid presentation, and $\SP$-portion.



The canonical forms of the twenty-two mis\`ere impartial games $G$ of birthday at most four were introduced by Conway~\cite{conway_1976}, and are duplicated here in Figure~\ref{figure:day4games}.  They are shown together with their mis\`ere quotients.

\begin{figure}[tb]
\centering
$\begin{array}{|c|r|c|c|}
\cline{2-4}
\multicolumn{1}{c|}{} & |\Q| & \Q & \P \bigstrut \\ \hline
\mathcal{T}_0 & 1 & \<~|~\> & \emptyset \bigstrut \\
\mathcal{T}_1 & 2 & \<a~|~a^2=1\> & \{a\} \bigstrut \\
\mathcal{T}_2 & 6 & \<a,b~|~a^2=1,\ b^3=b\> & \{a,b^2\} \bigstrut \\ 
\mathcal{R}_8 & 8 & \<a,b,c~|~a^2=1,\ b^3=b,\ bc=ab,\ c^2=b^2\> & \{a,b^2\} \bigstrut \\
\mathcal{T}_3 & 10 & \<a,b,c~|~a^2=1,\ b^3=b,\ c^3=c,\ c^2=b^2\> & \{a,b^2\} \bigstrut \\ 
\mathcal{S}_{12} & 12 & \<a,b,c~|~a^2=1,\ b^4=b^2,\ b^2c=b^3,\ c^2=1\> & \{a,b^2,ac\} \bigstrut \\ 
\mathcal{S}_{12}' & 12 & \<a,b,c~|~a^2=1,\ b^3=b,\ c^2=1\> & \{a,b^2,ac\} \bigstrut \\
\mathcal{R}_{14} & 14 & \<a,b,c~|~a^2=1,\ b^3=b,\ b^2c=c,\ c^3=ac^2\> & \{a,b^2,bc,c^2\} \bigstrut \\ 
\hline
\end{array}$
\caption{The eight mis\`ere quotients born by day 4.}
\label{figure:day4quotients}
\end{figure}

\begin{figure}
\centering
$\begin{array}{@{}|@{\ }c@{\ }|@{\ }c@{\ }|@{\ }c@{\ }|@{\ }c@{\ }|@{\ }c@{\ }|@{\ }c@{\ }|@{\ }c@{\ }|@{\ }c@{\ }|@{}}
\hline
0 &
\Star &
\begin{array}{@{}ccc@{}} \Star2 & \Star3 & \Star2_\sh \\ \Star3_\sh & \Star32 & \Star2_{\sh\sh} \\ \Star2_\sh2 & \Star2_\sh210 & \Star2_\sh3 \\ & \Star2_\sh32 \end{array} &
\Star2_\sh320 &
\begin{array}{@{}c@{}} \Star4 \\ \Star2_\sh3210 \end{array} &
\begin{array}{@{}c@{}} \Star2_\sh1 \\ \Star2_\sh21 \\ \Star2_\sh31 \end{array} &
\Star2_\sh321 &
\begin{array}{@{}c@{}} \Star2_\sh0 \\ \Star2_\sh20 \\ \Star2_\sh30 \end{array}
\\ \hline
\mathcal{T}_0 & \mathcal{T}_1 & \mathcal{T}_2 & \mathcal{R}_8 & \mathcal{T}_3 & \mathcal{S}_{12} & \mathcal{S}_{12}' & \mathcal{R}_{14} \bigstrut \\ \hline
\end{array}$
\caption{The 22 mis\`ere games born by day 4, grouped by quotient isomorphism type.}
\label{figure:day4games}
\end{figure}

There are exactly 4171780 games born by day five~\cite{conway_1976}.  They yield a bewildering variety of mis\`ere quotients, including infinite quotients; finite quotients with more than 1500 elements (and probably larger ones as well); and counterexamples to several reasonable-sounding statements about the general structure of mis\`ere quotients. It is conceivable that a complete survey of games born by day five might eventually be conducted.

\subsection*{The Games ${\bf 0.{(3310)}^n}$}
Suppose mis\`ere Nim is played with two restrictions:
\begin{itemize}
\item $4k$ beans may not be removed from any heap;
\item $4k+3$ beans may be removed only if it is the whole heap.
\end{itemize}

In standard notation, this game would be
\[\mathbf{0.(3310)^\infty} = \mathbf{0.3310331033103310\ldots}\]
Does its mis\`ere quotient have order~226?  The following approximations seem to suggest that it does:

\begin{center}
\begin{tabular}{lrr}
\multicolumn{1}{c}{$\Gamma$} & $|\Q|$ & pd \\ \hline
{\bf 0.3310}                 & 6                     & 3  \\ 
{\bf 0.33103310}             & 202                   & 7    \\ 
{\bf 0.331033103310}         & 226                   & 11      \\
{\bf 0.3310331033103310}     & 226                   & 15 \\
{\bf 0.33103310331033103310} & 226                   & 19 \\
\end{tabular}
\end{center}

\begin{question}
Does $\Q(\mathbf{0.(3310)^\infty}) = \Q(\mathbf{0.(3310)^n})$ for all $n \geq 3$?  Can we say anything interesting in general about games with infinite octal codes?
\end{question}

\section{Algorithms}

\label{appendix:algorithms}

In this appendix we describe \emph{MisereSolver}'s algorithm for calculating mis\`ere quotients.  Throughout this section, let $\A$ be an arbitrary finitely-generated closed set of games, treated as a free commutative monoid on generators $\mathscr{H} = \{H_1,H_2,\ldots,H_n\}$.  Denote by $\llex$ the lexicographic ordering on $\A$:
\[\left(\sum k_iH_i\right) \llex \left(\sum \ell_iH_i\right) \textrm{ iff $k_i < \ell_i$ for the largest $i$ such that $k_i \neq \ell_i$.}\]
Notice that if $X \in \A$ and $X'$ is an option of $X$, then $X' \llex X$.

The basic idea is to produce a sequence of increasingly accurate approximations to the mis\`ere quotient $\Q(\A)$.  If $\Q(\A)$ is indeed finite, then the sequence is guaranteed to converge to it.

At each stage of the iteration, we are given a promising r.b.m.\ $\QP$ (the ``candidate quotient''), together with a homomorphism $\Phi : \A \to \Q$ (represented as a mapping $\mathscr{H} \to \Q$).  If $\QP$ is the correct quotient of $\A$, with pretending function $\Phi$, then we are done.  If not, then there is some $X \in \A$, $X \neq 0$, such that either:
\begin{enumerate}
\item[(i)] $\Phi(X) \in \P$, but also $\Phi(X') \in \P$ for some option $X'$; or
\item[(ii)] $\Phi(X) \not\in \P$, but there is no option $X'$ with $\Phi(X') \in \P$.
\end{enumerate}
We say that such an $X$ is a \emph{failure} of the candidate $(\Q,\P,\Phi)$.  Now suppose that~$X$ is the \emph{lexicographically least} failure of $(\Q,\P,\Phi)$.  Then, by a straightforward induction, $\Phi$ correctly predicts the outcomes of all proper followers of $X$.  Therefore we know the outcome of $X$: it is necessarily the opposite of that predicted by $\Phi$.  Using this information, we produce a new candidate $(\Q',\P',\Phi')$, for which $X$ is \emph{not} a failure.  The construction will guarantee that no lexicographically smaller failures are introduced.  The least failures of successive candidate quotients are therefore strictly lexicographically increasing.

There are two main components of this algorithm.
\begin{itemize}
\item \emph{Verification}: Given a candidate $(\Q,\P,\Phi)$, the verification engine determines whether it is the correct quotient of $\A$, and efficiently identifies the least failure when it is not.
\item \emph{Recalibration}: Given a candidate $(\Q,\P,\Phi)$, together with the least failure~$X$, the recalibration engine constructs the next candidate $(\Q',\P',\Phi')$.
\end{itemize}

When \emph{MisereSolver} begins, it starts with an initial round of recalibration.  For the initial candidate, we use the mis\`ere quotient of $\<H_1,\ldots,H_{n-1}\>$ (computed recursively), taking $X = H_n$ to be the least failure.

We are reasonably certain that \emph{MisereSolver} uses the ``book'' algorithm for verification: it is both elegant and fast.  (In particular, we have improved substantially on the algorithms described in~\cite{plambeck_2005}.)  However, our recalibration algorithm feels very crude: \emph{MisereSolver} sometimes chooses very poor candidates; often they are considerably larger than the true quotient.  An improved recalibrator might dramatically extend \emph{MisereSolver}'s scope.

\subsection*{Verification}

Given a r.b.m.\ $\QP$ and a monoid homomorphism $\Phi : \A \to \Q$, we wish to determine:

\begin{enumerate}
\item[(a)] The $\llex$-least $X \in \A$ such that $\Phi(X) \in \P$ but $\Phi(X') \in \P$ for some option $X'$;
\item[(b)] The $\llex$-least $X \in \A$ such that $\Phi(X) \not\in \P$ but $\Phi(X') \not\in \P$ for all options~$X'$.
\end{enumerate}

As observed in \cite{plambeck_2005}, (b) is far more difficult computationally.  This is due to the presence of the universal quantifier: in searching for \emph{some} option in (a), we can essentially treat each generator as an independent entity; but to iterate over \emph{all} options in (b), we must consider the totality of generators involved in~$X$.  

For (a), we use the essential strategy outlined in \cite{plambeck_2005}.  The basis is the following theorem:

\begin{fact}[$\mathscr{P}$-Verification Theorem {\cite[Section 9.1.3]{plambeck_2005}}]
\label{fact_P_ver}
Let $\QP$ be a r.b.m.\ and fix a homomorphism $\Phi : \A \to \Q$.  The following are equivalent.
\begin{enumerate}
\item[(i)] There is a game $X \in \A$ and an option $X'$ such that $\Phi(X),\Phi(X') \in \P$.
\item[(ii)] There is a generator $H \in \mathscr{H}$, an option $H'$, and an element $x \in \Q$ such that $x\Phi(H),x\Phi(H') \in \P$.
\end{enumerate}
\end{fact}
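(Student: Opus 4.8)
The plan is to prove the two implications directly; no induction is required, and the entire \emph{content} is a careful bookkeeping of how a single move in a disjunctive sum decomposes, together with the fact that $\Phi(\A)$ is a submonoid of $\Q$.

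For $(i) \Rightarrow (ii)$, suppose $X \in \A$ has an option $X'$ with $\Phi(X), \Phi(X') \in \P$. Write $X$ as a product of heaps in the free commutative monoid $\A$. A single move in $X$ acts on exactly one heap-component $H = H_j$, replacing it by an option $H'$ of that heap (where $H'$ may be a smaller single heap, a sum of two heaps, or the empty position $0$); hence $X = HY$ and $X' = H'Y$ for some $Y \in \A$, with $Y = 0$ in the degenerate case that $X$ is itself a single heap. Put $x = \Phi(Y) \in \Q$. Since $\Phi$ is a homomorphism of monoids, $x\Phi(H) = \Phi(HY) = \Phi(X) \in \P$ and $x\Phi(H') = \Phi(H'Y) = \Phi(X') \in \P$, which is exactly (ii). This also shows the witness $x$ may always be chosen inside $\Phi(\A)$, with $x = 1$ in the single-heap case.

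For $(ii) \Rightarrow (i)$, suppose $H \in \mathscr{H}$, $H'$ is an option of $H$, and $x \in \Q$ satisfies $x\Phi(H), x\Phi(H') \in \P$. Since $\Phi$ is onto (and by the remark above it would in any case suffice to consider $x \in \Phi(\A)$), choose $Y \in \A$ with $\Phi(Y) = x$ and set $X = HY$ and $X' = H'Y$. Then $X, X' \in \A$ because $\A$ is closed, $X'$ is an option of $X$ obtained by making the move $H \to H'$ in that component, and $\Phi(X) = x\Phi(H) \in \P$ while $\Phi(X') = x\Phi(H') \in \P$, giving (i).

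The proof is thus short, and there is no real obstacle — only a handful of structural facts that must be invoked cleanly: that every one-move follower of a disjunctive sum replaces a single component by one of its options (so the remaining heaps factor out and are absorbed into the monoid element $x$); that $H'$ need not be a generator (it can be $0$, with $\Phi(0) = 1$, or a two-heap split), which is harmless since $H'$ is only ever fed to $\Phi$; and the realizability of $x$ as $\Phi(Y)$ in the backward direction. The point worth emphasizing is the \emph{payoff} rather than the difficulty: the statement collapses the a priori unbounded search over all games $X$ and all their options in clause~(a) to a search over triples $(H, H', x)$ with $H$ ranging over the finitely many generators and $x$ over $\Q$, which is finite whenever $\Q$ is.
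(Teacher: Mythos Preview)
Your proof is correct, and in fact the paper does not prove this statement at all: it is stated as a \emph{Fact} with a citation to Plambeck~\cite{plambeck_2005}, so there is no ``paper's own proof'' to compare against. Your argument is exactly the intended one---decompose a move in a sum as a move in one component, and push the rest through $\Phi$.

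One small point worth tightening: in the direction $(ii)\Rightarrow(i)$ you invoke surjectivity of $\Phi$, but the hypothesis as stated only fixes ``a homomorphism $\Phi:\A\to\Q$,'' not necessarily onto. Your parenthetical already contains the fix: since the forward direction produces a witness $x=\Phi(Y)\in\Phi(\A)$, the equivalence you actually need (and the one the algorithm uses, since it then looks for the $\llex$-least $X$ with $\Phi(X)=x$) is with condition~(ii) restricted to $x\in\Phi(\A)$. If $\Phi$ is not surjective and $x\notin\Phi(\A)$, the implication $(ii)\Rightarrow(i)$ can genuinely fail, so it is cleaner either to assume surjectivity explicitly or to state (ii) with $x$ ranging over $\Phi(\A)$.
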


Therefore, to resolve (a), we can iterate over all pairs $(H,x) \in \mathscr{H} \times \Q$, testing condition (ii) in Fact~\ref{fact_P_ver}.  For each such pair satisfying (ii), let $X$ be the $\llex$-least game with $\Phi(X) = x$; then $H+X$ is a candidate for the $\llex$-least failure.  This yields at most $|\mathscr{H}| \cdot |\mathcal{Q}|$ candidates, over which we simply minimize.

Our technique for (b) uses the following central idea.

\begin{definition}
Let $X,Y \in \A$, and suppose that $X \llex Y$, $\Phi(X) = \Phi(Y)$, and $\Phi''X \subset \Phi''Y$.  Then we say that $Y$ is \emph{subsumed} by $X$.
\end{definition}

Now if $Y$ is subsumed by $X$, then $Y$ cannot be the $\llex$-least failure of $(\Q,\P,\Phi)$, because \emph{if} $Y$ satisfies (b), then so does $X$.  Moreover, $Y + Z$ cannot be the $\llex$-least failure, for \emph{any} $Z \in \A$: once again, if $Y + Z$ satisfies (b), then so does $X + Z$; and necessarily $X + Z \llex Y + Z$.  Therefore, we can completely disregard any element of $\A$ containing $Y$ as a subword.

We may therefore traverse the set $\A$ in lexicographic order, pruning whenever we reach an $X \in \A$ that we know to be subsumed.  Since $\Q$ is finite and there are at most $|\Q| \cdot 2^{|\Q|}$ possibilities for $(\Phi(X),\Phi''X)$, this traversal is guaranteed to terminate (even though $\A$ is infinite).  The full procedure is summarized as Algorithm~\ref{algorithm:nverification}.

\begin{algorithm}[tbp]
\hrule\vspace{0.15cm}
\begin{algorithmic}[1]
\State $T \gets \emptyset$
\State \Return \Call{LexLeastNFailure}{$n$, $0$}
\Statex
\Procedure {LexLeastNFailure}{$d$, $X$}
\If{$d = 0$} \Comment{Done recursing; check this value of $X$}
  \If{$X = 0$}
    \State \Return \textsc{null} \Comment{$0$ must be special-cased in mis\`ere play}
  \EndIf
  \State $x \gets \Phi(X)$; $\mathcal{E} \gets \Phi''X$
  \If{$x \in \P$}
    \State \Return \textsc{null} \Comment{We are seeking $\mathscr{N}$-failures only}
  \Else
    \If{$\P \cap \mathcal{E} = \emptyset$}
      \State \Return $X$ \Comment{We've found the lex-least failure}
    \Else
      \State $T \gets T \cup \{(x,\mathcal{E})\}$ \Comment{Add a transition record for $(x,\mathcal{E})$}
      \State \Return \textsc{null} \Comment{Keep searching}
    \EndIf
  \EndIf
\Else \Comment{Continue recursing from depth $d$}
  \Loop
    \State $Y \gets$ \Call{LexLeastNFailure}{$d-1$, $X$}
    \If{$Y \neq \textsc{null}$}
      \State \Return $Y$ \Comment{Found the answer; return it}
    \EndIf
    \State $X \gets X + H_d$ \Comment{Add a heap $H_d$ to $X$}
    \State $x \gets \Phi(X)$; $\mathcal{E} \gets \Phi''X$
    \If{there exists $\mathcal{D} \subset \mathcal{E}$ with $(x,\mathcal{D}) \in T$}
      \State \Return \textsc{null} \Comment{$X$ is subsumed; return immediately}
    \EndIf
  \EndLoop
\EndIf
\EndProcedure
\end{algorithmic}
\vspace{0.15cm}\hrule
\caption{$\mathscr{N}$-Verification Algorithm.}
\label{algorithm:nverification}
\end{algorithm}

\subsection*{Recalibration}

Given an r.b.m.\ $\QP$, a map $\Phi : \mathscr{H} \to \Q$, and a lexicographically least failure $X \in \A$ (as certified by Verification), we must construct a new triple $(\Q',\P',\Phi')$ whose least failure~$Y$ is guaranteed to satisfy $Y >_\mathrm{lex} X$.  The construction must further guarantee that if $\Q(\A)$ is finite, and if $(\Q_1,\P_1,\Phi_1)$, $(\Q_2,\P_2,\Phi_2)$, $\ldots$ is a sequence of successive recalibrations, then $\Q(\A) = (\Q_n,\P_n)$ for sufficiently large $n$.


What follows is an informal description of \emph{MisereSolver}'s recalibration procedure.  The essential idea is to ``free'' all generators involved in the failure $X$, mapping them to suitable monoids of the form $\<t~|~t^{n+k}=t^n\>$.  This gives an expanded monoid, in which the true outcome of $X$ may be ``marked'' without affecting the indicated outcome of any $Y \llex X$.  The expanded monoid is then immediately collapsed back down to a r.b.m.  This ensures that we can recalibrate as many times as necessary while keeping the candidate quotients reasonably small.

Given a candidate $(\Q,\P,\Phi)$ with $\llex$-least failure $X$:
\begin{enumerate}
\item Let $\mathscr{J} = \{J_1,\ldots,J_m\} \subset \mathscr{H}$ be the set of generators appearing in $X$.  Let $\Q^-$ be the submonoid of $\Q$ generated by $\{\Phi(H) : H \in \mathscr{H} \setminus \mathscr{J}\}$.  For each~$i$ ($1 \leq i \leq m$), let $\mathcal{R}_i = \<t~|~t^{n+k}=t^n\>$, for some carefully chosen~$k,n$.\footnote{\emph{MisereSolver}'s method of choosing such $k,n$ is currently rather crude and is not worth an extended discussion.  The important thing is that $k$ and $n$ must be large enough to arrive eventually at the correct presentation, but small enough to keep the computations tractable.  There is much room for improvement here.}
Now put
\[\Q^* = \Q^- \times \mathcal{R}_1 \times \mathcal{R}_2 \times \cdots \times \mathcal{R}_m.\]
\item Define $\Phi^* : \mathscr{H} \to \Q^*$ as follows.  For each $i$, put $\Phi^*(J_i) =$ the generator of $\mathcal{R}_i$.  For $H \not\in \mathscr{J}$, put $\Phi^*(H) = (\Phi(H),1,1,\ldots,1)$.
\item Define $\P^* \subset \Q^*$ as follows.  First put $1 \not\in \P^*$.  Then for each $x \in \Q^*$, let~$X$ be the $\llex$-least element of $\A$ with $\Phi^*(X) = x$.  Since $X' \llex X$ for each option $X'$ of $X$, we may assume that we have already defined whether each $\Phi^*(X') \in \P^*$.  So define
\[x \in \P^* \Longleftrightarrow \Phi^*(X') \not\in \P^* \textrm{ for each option } X'.\]
Essentially, we are guessing whether each such $X$ is a $\mathscr{P}$-position, under the assumption that $\Phi^*$ is correct on all its options.
\item This gives a bipartite monoid $(\Q^*,\P^*)$.  Compute its reduction $(\Q',\P')$ and let $\Phi'$ be the factor map.
\end{enumerate}

There is a great deal of inefficiency in this process: much more of the monoid is ``freed'' than is needed to enforce the correct outcome of~$X$.  Nonetheless, it works quite well for our purposes, and it satisfies the following two crucial theorems, which we state without proof.

\begin{theorem}[Recalibration Theorem]
Let $\QP$ be a r.b.m.\ and $\Phi : \A \to \Q$ a homomorphism with $\llex$-least failure $X$.  Let $(\Q',\P',\Phi')$ be the recalibration of $(\Q,\P,\Phi)$ by $X$ and let $Y$ be the lex-least failure of $\Phi'$.  Then $X \llex Y$.
\end{theorem}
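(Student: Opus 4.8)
The plan is to reduce everything to a single statement about the \emph{unreduced} candidate $(\Q^*,\P^*,\Phi^*)$ produced in steps 1--3 of the recalibration, and then prove that statement by induction on $\llex$. First I would record that the reduction in step 4 changes nothing that matters here: writing $\Phi' = \rho\circ\Phi^*$ for the reduction homomorphism $\rho\colon\Q^*\to\Q'$, indistinguishable elements of $\Q^*$ lie in $\P^*$ together (take the distinguishing element to be $1$), so $\rho^{-1}(\P')=\P^*$ and hence $\Phi'(Z)\in\P'$ iff $\Phi^*(Z)\in\P^*$ for every $Z\in\A$. Consequently $Z$ is a failure of $(\Q',\P',\Phi')$ iff it is a failure of $(\Q^*,\P^*,\Phi^*)$, and it suffices to prove:
\[\text{(}\star\text{)}\qquad \text{for every } Z\in\A \text{ with } Z\lelex X,\quad \Phi^*(Z)\in\P^*\ \Longleftrightarrow\ Z \text{ is a }\PPos.\]
Granting $(\star)$, the theorem follows: if $Z\lelex X$ and $Z\neq 0$, every option $Z'$ satisfies $Z'\llex Z\lelex X$, so $(\star)$ applies to $Z$ and to all its options; plugging the resulting equivalences into the definition of ``failure'' and using that (for $Z\neq 0$) $Z$ is a $\PPos$ exactly when all of its options are $\NPos$s, both failure alternatives become contradictions. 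Since $Z=0$ is excluded by definition, no $Z\lelex X$ is a failure of $\Phi'$, so the $\llex$-least failure $Y$ must satisfy $X\llex Y$.

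The proof of $(\star)$ rests on two structural facts about the construction. The first is that $\Phi$ factors through $\Phi^*$, i.e.\ there is a homomorphism $\pi\colon\Q^*\to\Q$ with $\pi\circ\Phi^*=\Phi$; equivalently, any two words of $\A$ with the same $\Phi^*$-image have the same $\Phi$-image. This is the one place where the ``carefully chosen'' exponents are used: the only relations imposed in forming $\Q^* = \Q^-\times\mathcal{R}_1\times\cdots\times\mathcal{R}_m$ are the relations of the submonoid $\Q^-\subseteq\Q$, commutativity, and $t_i^{n_i+k_i}=t_i^{n_i}$; the first two already hold in $\Q$, and the last holds under $\Phi$ as soon as $n_i$ exceeds the pre-period of $\Phi(J_i)$ in the finite monoid $\Q$ and $k_i$ is a multiple of its period. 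The second fact is that $X$ is itself the $\llex$-least element of $\A$ with $\Phi^*$-image $\Phi^*(X)$: since $X$ involves only the freed generators $\mathscr{J}$, and $n_i$ is chosen above the multiplicity of $J_i$ in $X$, the $\mathcal{R}_i$-coordinates of $\Phi^*(X)$ pin those multiplicities exactly, so every other preimage of $\Phi^*(X)$ has the form $X+W$ with $W\neq 0$ supported on $\mathscr{H}\setminus\mathscr{J}$; the largest heap occurring in $W$ is absent from $X$, whence $X+W\glex X$.

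Now I would prove $(\star)$ by induction on $\llex$. For $Z=0$: $\Phi^*(0)=1\notin\P^*$ by fiat, and $0$ is an $\NPos$. For $Z\neq 0$, put $x=\Phi^*(Z)$ and let $Z_0$ be the $\llex$-least preimage of $x$, so $Z_0\lelex Z\lelex X$. By the bootstrap definition of $\P^*$ in step 3, $x\in\P^*$ iff $\Phi^*(Z_0')\notin\P^*$ for every option $Z_0'$ of $Z_0$; each such $Z_0'$ satisfies $Z_0'\llex Z_0\lelex Z$, so by the inductive hypothesis this says precisely that every option of $Z_0$ is an $\NPos$, i.e.\ that $Z_0$ is a $\PPos$. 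It remains to identify the outcomes of $Z_0$ and of $Z$. If $Z=X$, then by the second structural fact $Z_0=X=Z$, so there is nothing to prove. If $Z\llex X$, then also $Z_0\llex X$; since $X$ is the $\llex$-least failure of the old candidate, $\Phi$ predicts outcomes correctly for all games strictly below $X$, and using $\Phi(Z_0)=\pi(x)=\Phi(Z)$ from the first structural fact we get $Z_0$ is a $\PPos$ iff $\Phi(Z_0)\in\P$ iff $\Phi(Z)\in\P$ iff $Z$ is a $\PPos$. This completes the induction and the proof.

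The delicate point — and what I expect to be the main obstacle in writing it up honestly — is precisely the seam at $Z=X$ in the last step. Below $X$ we are free to appeal to the correctness of the \emph{old} $\Phi$, but at $X$ the old $\Phi$ is (deliberately) wrong, so the argument there must instead lean entirely on the fact that freeing the generators of $\mathscr{J}$ leaves $X$ as its own $\llex$-least preimage in $\Q^*$; getting that clean requires the exponents $n_i$ to be at least the $J_i$-multiplicity in $X$, and it is worth stating explicitly which conditions on the ``carefully chosen'' $k_i,n_i$ the argument actually consumes (past the pre-period, $k_i$ divisible by the period of $\Phi(J_i)$ in $\Q$, and $n_i$ above the $J_i$-multiplicity in $X$). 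Everything else is the routine $\llex$-induction above and the standard bookkeeping for bipartite-monoid reductions.
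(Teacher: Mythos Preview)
The paper explicitly states the Recalibration Theorem \emph{without proof} (``two crucial theorems, which we state without proof''), so there is no argument in the paper to compare yours against. Your proof is essentially correct and fills this gap cleanly: the reduction to $(\star)$ via $\rho^{-1}(\P')=\P^*$ is the right first move, and the $\llex$-induction driven by the two structural facts is sound.

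Two small remarks. First, your inductive step silently assumes $Z_0\neq 0$: when $x=\Phi^*(Z)=1$ the special clause ``$1\notin\P^*$'' of step~3 applies rather than the recursive one, and the sentence ``every option of $Z_0$ is an \NPos, i.e.\ $Z_0$ is a \PPos'' is false for $Z_0=0$ in mis\`ere play. This case cannot arise at $Z=X$ (since $X$ uses every $J_i$, its $\mathcal{R}_i$-coordinates are nontrivial), and for $Z\llex X$ one simply invokes correctness of the old $\Phi$ together with $\pi(1)=1\notin\P$; but it deserves a sentence. Second, you are right to surface the hypotheses on the ``carefully chosen'' exponents: the paper deliberately leaves them vague (its footnote calls the choice ``not worth an extended discussion''), and your argument makes explicit exactly which inequalities the theorem actually consumes---$n_i$ beyond both the index of $\Phi(J_i)$ in $\Q$ and the $J_i$-multiplicity in $X$, and $k_i$ a multiple of the period of $\Phi(J_i)$.
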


\begin{theorem}[Termination Theorem]
Let $\A$ be a closed set of games with \emph{finite} mis\`ere quotient $\QP$ and pretending function $\Phi : \A \to \Q$.  Let $$(\Q_1,\P_1,\Phi_1),(\Q_2,\P_2,\Phi_2),\ldots$$ be a sequence of candidate quotients for $\A$, with each $(\Q_{n+1},\P_{n+1},\Phi_{n+1})$ computed by recalibrating on the $\llex$-least failure of $(\Q_n,\P_n,\Phi_n)$.  Then for all sufficiently large $n$, we have $(\Q_n,\P_n,\Phi_n) = (\Q,\P,\Phi)$.
\end{theorem}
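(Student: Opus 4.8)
The plan is to combine the Recalibration Theorem with two further facts: uniqueness of the reduced quotient of $\A$, and a finiteness argument for the sequence of $\llex$-least failures that unavoidably dips into the internal workings of the recalibrator.

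First I would set up the dynamics. Write $X_n$ for the $\llex$-least failure of $(\Q_n,\P_n,\Phi_n)$. By the Recalibration Theorem the $X_n$ are defined and $\llex$-strictly increasing, and by the induction already carried out in the text each $\Phi_n$ correctly predicts the outcome of every $Z\llex X_n$. Now observe that if some candidate has \emph{no} failure at all, then it predicts every outcome correctly; being a reduced bipartite monoid carrying a surjective homomorphism $\Phi_n\colon\A\to\Q_n$ that agrees everywhere with the true outcome function, it must equal $(\Q,\P,\Phi)$. (Uniqueness is short: $\Phi_n(G)=\Phi_n(H)$ forces $\Phi_n(G+Z)=\Phi_n(H+Z)$, hence $G+Z$ and $H+Z$ have the same outcome for every $Z$; so $\Phi_n(G)\mapsto\Phi(G)$ is well defined, since $\Q=\A/{\sim}$ for $\sim$ indistinguishability in $\A$, and it and its evident inverse respect the $\SP$-portions; see~\cite{siegel_200Xd}.) Since the iteration has then converged, it suffices to prove that the sequence $(X_n)$ is finite.

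Suppose it is not, so $(X_n)$ is an infinite, $\llex$-strictly increasing sequence in $(\A,\llex)$, which is order-isomorphic to the ordinal $\omega^{|\mathscr H|}$. Because $\omega^{|\mathscr H|}$ has infinite ascending chains, strict increase alone yields no contradiction, and the structure of the recalibrator must enter. Passing to a subsequence I would arrange that the multiplicity of each generator $H_i$ in $X_n$ either is eventually constant or tends to infinity; letting $\mathscr J\subseteq\mathscr H$ be the (nonempty) set of generators of the second kind, a further extraction gives $X_n=C+\sum_{H_j\in\mathscr J}b^{(n)}_jH_j$ with $C$ fixed and each $b^{(n)}_j\to\infty$. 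Each recalibration acting on such an $X_n$ frees every generator occurring in it — in particular every $H_j\in\mathscr J$ — replacing it by a cyclic monoid $\<t\mid t^{s+k}=t^s\>$. The property of the recalibrator I would isolate, and which the footnote above defers, is that the parameters $s,k$ it chooses grow without bound with the size of the failure, so that eventually, at a single step, every $H_j\in\mathscr J$ is freed with $s$ exceeding $p$ and $k$ divisible by $q$, where $p,q$ are a preperiod and period controlling the maps $b\mapsto y\,\Phi(H_j)^b$ uniformly in $j$ and $y\in\Q$ — such $p,q$ exist because $\Q$ is finite. Once the parameters are adequate in this sense, the freed factors reproduce exactly the true eventually-periodic dependence of the relevant quotient products on the exponents $b_j$; so, after the reduction step, no position $C+\sum_j b_jH_j$ with all $b_j$ large — in particular no later $X_n$ — is a failure of the resulting candidate or of any candidate obtained from it by further recalibration. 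This contradicts the choice of subsequence, so $(X_n)$ is finite, and the theorem follows from the first paragraph.

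I expect the last part of the third paragraph to be the real obstacle, the rest being routine. Everything turns on converting ``adequate recalibration parameters'' into ``no further failures along the $\mathscr J$-directions,'' and this is delicate exactly where the footnote is silent: one must pin down and verify the minimal growth hypothesis on the parameter-selection rule; one must control the reduction performed after freeing, which can re-identify monoid elements; and one must check that the re-guessed $\P^*$ — built inductively on $\llex$ from least preimages — reproduces the true outcomes not only inside the freed region but also along options that leave it (only finitely many heap types occur altogether, which is what keeps this finite). By comparison the uniqueness step and the subsequence extraction are standard.
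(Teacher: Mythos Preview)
The paper states this theorem \emph{without proof}: the sentence immediately preceding the Recalibration Theorem reads ``which we state without proof,'' and that qualifier applies to both theorems. There is therefore no paper argument to compare against.

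Your sketch is a reasonable outline of how such a proof would have to go, and you are honest about where the real work lies. The first two paragraphs are sound: strict $\llex$-increase of the $X_n$ follows from the Recalibration Theorem, and the uniqueness step (a candidate with no failure must equal the true reduced quotient) is standard. The subsequence extraction in $\omega^{|\mathscr{H}|}$ is also routine.

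The gap you yourself identify is genuine and, in the paper as written, unbridgeable: your argument hinges on a growth property of the recalibrator's cyclic parameters $(s,k)$ that the paper explicitly declines to specify. The footnote says only that ``$k$ and $n$ must be large enough to arrive eventually at the correct presentation'' --- which is precisely the conclusion you are trying to prove, not a usable hypothesis. Without a concrete parameter-selection rule, the Termination Theorem is not a mathematical statement with a fixed proof but rather a constraint that any acceptable rule must satisfy. Your proposal would work once such a rule is fixed and shown to have the requisite growth, but as the paper stands you cannot complete it. Your closing paragraph correctly flags the remaining technical obstacles (controlling the reduction step, verifying that the re-guessed $\P^*$ matches true outcomes along options leaving the freed region); these are real, and the paper offers no help with them either.
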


In the raw form described here, Recalibration can be extremely slow.  If the least failure $X$ involves many generators, the expanded monoid $\Q^*$ is often quite large.  Fortunately, a tremendous shortcut is available: in step 1, rather than expand along the entire set $\mathscr{J}$, we can first try to expand along some proper subset $\mathscr{J}' \subsetneq \mathscr{J}$.  This almost always succeeds---in many cases, when $\mathscr{J}'$ is a singleton.

\subsection*{Optimizations for Heap Games}

\emph{MisereSolver}'s chief application is to calculate mis\`ere quotients of heap games.  When $\Gamma$ is an octal game, it is likely that $\Q_{n+1}(\Gamma) = \Q_n(\Gamma)$ for many values of~$n$.  In such cases \emph{MisereSolver} uses significant optimizations to compute the next pretension $\Phi(H_{n+1})$.

First, whenever a new partial quotient $\Q_n(\Gamma)$ is computed, \emph{MisereSolver} also computes the meximal sets $\mathcal{M}_x$ for each $x \in \Q_n$.  It also computes the antichain of lower bounds of the transition algebra $T_n$.  (See Section~5 for discussion.)  The algorithm used to compute this antichain is virtually identical to Algorithm~\ref{algorithm:nverification}.

Then, before computing $\Q_{n+1}(\Gamma)$, \emph{MisereSolver} first calculates the set $\mathcal{E} = \Phi''H_{n+1}$.  If there exists an $x \in \Q_n$ and a lower bound $(x,\mathcal{D}) \in T_n$ such that $\mathcal{D} \subset \mathcal{E} \subset \mathcal{M}_x$, then by the Mex Interpolation Principle it follows that $\Phi(H_{n+1}) = x$.  This check is extremely fast.  Furthermore, by the strong form of the Mex Interpolation Principle, it is not necessary to update the information about~$T_n$.  Thus \emph{MisereSolver} can run quickly through a large number of interpolated heaps.

If no such lower bound $(x,\mathcal{D})$ exists, it might still be the case that $\Q_{n+1}(\Gamma) = \Q_n(\Gamma)$.  To test this, \emph{MisereSolver} tries every $x$ with $\mathcal{E} \subset \mathcal{M}_x$.  For each such~$x$, the software assigns $\Phi(H_{n+1}) = x$ and runs Algorithm~\ref{algorithm:nverification}.  If the algorithm returns \textsc{null} for some value of $x$, then we are done.  (However, we must recompute the antichain of lower bounds of $T_{n+1}$.)  If it fails on every $x$, then \emph{MisereSolver} proceeds with the full recalibration/\allowbreak verification procedure described above.

\bibliography{games}

\end{document}